\newtheorem{theorem}{Theorem}[section]
\newtheorem{lemma}[theorem]{Lemma}
\newtheorem{corollary}[theorem]{Corollary}
\newtheorem{remark}[theorem]{Remark}
\newtheorem{definition}[theorem]{Definition}
\begin{document}
\setcounter{page}{1}
\title{Eigenvalue estimates for submanifolds in Hadamard manifolds and product manifolds $N\times\mathbb{R}$}
\author{Jing Mao$^{\ast}$, Rong-Qiang Tu and Kai Zeng}
\date{}
\protect\footnotetext{\!\!\!\!\!\!\!\!\!\!\!\!
{$^{\ast}$Corresponding author}\\
{\bf MSC 2010:} 53C40, 53C42, 58C40. \\
 { ~~Key Words: Eigenvalues; Drifting Laplacian; $p$-Laplacian; Hadamard manifolds; Product manifolds.}  }
\maketitle ~~~\\[-15mm]
\begin{center}{\footnotesize  Faculty of Mathematics and Statistics,\\
Key Laboratory of Applied Mathematics of Hubei Province,\\
 Hubei University, Wuhan, 430062, China \\
jiner120@163.com, jiner120@tom.com}
\end{center}

\begin{abstract}
In this paper, we investigate submanifolds with locally bounded mean
curvature in Hadamard manifolds, product manifolds
$N\times\mathbb{R}$, submanifolds with bounded $\varphi$-mean
curvature in the hyperbolic space, and successfully give lower
bounds for the weighted fundamental tone and the first eigenvalue of
the $p$-Laplacian.
\end{abstract}

\markright{\sl\hfill  J. Mao, R.-Q. Tu and K. Zeng \hfill}

\section{Introduction} \label{section1}
\renewcommand{\thesection}{\arabic{section}}
\renewcommand{\theequation}{\thesection.\arabic{equation}}
\setcounter{equation}{0} \setcounter{maintheorem}{0}

Let $(M,g)$ be an $n$-dimensional ($n\geq2$) smooth Riemannian
manifold with the Riemannian metric $g$, the gradient operator
$\nabla$ and the Laplacian $\Delta=\mathrm{div}\circ\nabla$. For an
open bounded connected domain $\Omega\subset M$, the classical
Dirichlet eigenvalue problem on $\Omega$ is actually to find
possible real numbers such that the following boundary value problem
(BVP for short)
\begin{eqnarray} \label{1-1}
\left\{
\begin{array}{ll}
\Delta{u}+\lambda{u}=0  \qquad\quad {\rm{in}}~\Omega,\\
 u=0   \qquad \qquad \quad ~~~ {\rm{on}}~\partial\Omega,    & \quad
\end{array}
\right.
\end{eqnarray}
has a nontrivial solution $u$. The desired real numbers $\lambda$
are called \emph{eigenvalues} of $\Delta$, and the space of
solutions of each $\lambda$ is called its eigenspace which is a
vector space. It is well known that for the BVP (\ref{1-1}), the
self-adjoint operator $\Delta$ only has the discrete spectrum whose
elements (i.e., eigenvalues) can be listed increasingly as follows
\begin{eqnarray*}
0<\lambda_{1}(\Omega)<\lambda_{2}(\Omega)\leq\cdots\uparrow\infty,
\end{eqnarray*}
and each associated eigenspace has finite dimension. $\lambda_{i}$
($i\geq1$) is called the $i$th Dirichlet eigenvalue of $\Delta$. By
\emph{Domain monotonicity of eigenvalues with vanishing Dirichlet
data} (cf. \cite[pp. 17-18]{ic}), we know that
$\lambda_{1}(\Omega_1)\leq\lambda_{1}(\Omega_2)$ if
$\Omega_1\supset\Omega_2$.

For a domain $\Omega\subseteq M$ (with or without boundary
$\partial\Omega$), one can define \emph{the fundamental tone}
$\lambda^{\ast}_{1}(\Omega)$ of $\Omega$ as
\begin{eqnarray*}
\lambda^{\ast}_{1}(\Omega):=\inf\left\{\frac{\int_{\Omega}\|\nabla
f\|^{2}dv}{\int_{\Omega}f^{2}dv}\Big{|}f\in
W^{1,2}_{0}(\Omega),f\neq0\right\},
\end{eqnarray*}
where $W^{1,2}_{0}(\Omega)$ is the completion of the set
$C^{\infty}_{0}(\Omega)$ of smooth functions compactly supported on
$\Omega$ under the Sobolev norm
$\|u\|_{1,2}=\{\int_{\Omega}(|u|^{2}+\|\nabla
u\|^{2})dv\}^{\frac{1}{2}}$, with $dv$ the Riemannian volume element
with respect to the metric $g$. In the sequel, without
specification, \emph{$\|\cdot\|$ denotes the norm of some prescribed
vector field}, and, for convenience, \emph{all the integrals will be
dropped the corresponding measures}. If $\Omega$ is unbounded, then
the fundamental tone $\lambda^{\ast}_{1}(\Omega)$ coincides with the
infimum $\inf(\Sigma)$ of the spectrum $\Sigma\subseteq[0,+\infty)$
of the unique self-adjoint extension of the Laplacian $\Delta$
acting on $C^{\infty}_{0}(\Omega)$, which is also denoted by
$\Delta$. If $\Omega$ has compact closure and piecewise smooth
boundary $\partial\Omega$ (maybe nonempty),
$\lambda^{\ast}_{1}(\Omega)$ equals the first closed eigenvalue (if
$\partial\Omega=\emptyset$) or the first Dirichlet eigenvalue (if
$\partial\Omega\neq\emptyset$) $\lambda_{1}(\Omega)$ of $\Delta$. If
$\Omega_{1}\subset\Omega_{2}$ are bounded domains, then
$\lambda^{*}_{1}(\Omega_{1})\geq\lambda^{*}_{1}(\Omega_{2})\geq0$.

From the above introduction, we know that \emph{for a bounded domain
$\Omega$ with boundary, the degree of smoothness of the boundary
$\partial\Omega$ decides the fundamental domain
$\lambda^{\ast}_{1}(\Omega)$ would degenerate into the first
Dirichlet eigenvalue $\lambda_{1}(\Omega)$ of the Laplacian or not}.

  Let $B_{M}(q,\ell)$ be a geodesic ball, with
center $q$ and radius $\ell$, on a complete noncompact Riemannian
manifold $M$. By the monotonicity of the first Dirichlet eigenvalue
$\lambda_{1}$ or the fundamental tone $\lambda_{1}^{\ast}$, one can
define a limit $\lambda_{1}(M)$ by
\begin{eqnarray*}
\lambda_{1}(M):=\lim\limits_{\ell\rightarrow\infty}\lambda_{1}\left(B_{M}(q,\ell)\right)=\lim\limits_{\ell\rightarrow\infty}\lambda_{1}^{\ast}\left(B_{M}(q,\ell)\right),
\end{eqnarray*}
which is independent of the choice of the center $q$. Clearly,
$\lambda_{1}(M)\geq0$. Schoen and Yau \cite[p. 106]{sy} suggested
that it is an important question to find conditions which will imply
$\lambda_{1}(M)>0$. Speaking in other words, manifolds with
$\lambda_{1}(M)>0$ might have some special geometric properties.
There are many interesting results supporting this. For instance,
 Mckean \cite{hpm}
showed that for an $n$-dimensional complete noncompact, simply
connected Riemannian manifold $M$ with sectional curvature
$K_{M}\leq-a^{2}<0$, $\lambda_{1}(M)\geq\frac{(n-1)^{2}a^{2}}{4}>0$,
and moreover,
$\lambda_{1}(\mathbb{H}^{n}(-a^{2}))=\frac{(n-1)^{2}a^{2}}{4}$ with
$\mathbb{H}^{n}(-a^{2})$ the $n$-dimensional hyperbolic space of
sectional curvature $-a^{2}$. Grigor'yan \cite{ag} showed that if
$\lambda_{1}(M)>0$, then $M$ is non-parabolic. Cheung and Leung
\cite{cl} proved that if $M$ is an $n$-dimensional complete minimal
submanifold in the hyperbolic $m$-space $\mathbb{H}^{m}(-1)$, then
$\lambda_{1}(M)\geq\frac{(n-1)^2}{4}>0$, and moreover, $M$ is
non-parabolic, i.e., there exists a non-constant bounded subharmonic
function on $M$. They also showed that if furthermore $M$ has at
least two ends, then there exists on $M$ a non-constant bounded
harmonic function with finite Dirichlet energy.

Consider the BVP
\begin{eqnarray} \label{1-2}
\left\{
\begin{array}{ll}
\Delta_{\varphi}{u}+\lambda{u}=0  \qquad\quad {\rm{in}}~\Omega,\\
 u=0   \qquad \qquad \quad ~~~ ~~{\rm{on}}~\partial\Omega,    & \quad
\end{array}
\right.
\end{eqnarray}
where $\Omega\subset M$ is an open bounded connected domain in a
given Riemannian manifold $M$, $\Delta_{\varphi}u:=\Delta
u-\langle\nabla u,\nabla\varphi\rangle$ is the weighted Laplacian
(also called the drifting Laplacian) on $M$, and $\varphi$ is a
real-valued smooth function on $M$. Similar to the BVP (\ref{1-1}),
$\Delta_{\varphi}$ in the BVP (\ref{1-2}) only has the discrete
spectrum and all the eigenvalues in the discrete spectrum can be
listed increasingly. By Rayleigh's theorem and Max-min principle, it
is easy to know that the first Dirichlet eigenvalue
$\lambda_{1,\varphi}(\Omega)$ of $\Delta_{\varphi}$ on $\Omega$ can
be characterized by
\begin{eqnarray*}
\lambda_{1,\varphi}(\Omega)=\inf\left\{\frac{\int_{\Omega}\|\nabla
f\|^{2}e^{-\varphi}}{\int_{\Omega}f^{2}e^{-\varphi}}\Big{|}f\in
W^{1,2}_{0}(\Omega),f\neq0\right\}.
\end{eqnarray*}
Similar to the case of the Laplacian, for a (bounded or unbounded)
domain $\Omega\subseteq M$ (with or without boundary
$\partial\Omega$), one can define \emph{the weighted fundamental
tone} $\lambda^{\ast}_{1,\varphi}(\Omega)$ of $\Omega$ as
\begin{eqnarray*}
\lambda^{\ast}_{1,\varphi}(\Omega):=\inf\left\{\frac{\int_{\Omega}\|\nabla
f\|^{2}e^{-\varphi}}{\int_{\Omega}f^{2}e^{-\varphi}}\Big{|}f\in
W^{1,2}_{0}(\Omega),f\neq0\right\},
\end{eqnarray*}
and it is not difficult to get that
$\lambda^{\ast}_{1,\varphi}(\Omega)=\lambda_{1,\varphi}(\Omega)$ if
$\Omega$ has compact closure and its boundary $\partial\Omega$ is
piecewise smooth.

Domain monotonicity of eigenvalues with vanishing Dirichlet data
also holds for the first Dirichlet eigenvalue of $\Delta_{\varphi}$
(see, e.g., \cite[Lemma 1.5]{dm2}). This implies that for a complete
noncompact Riemannian manifold $M$, one can
 define the following limit
\begin{eqnarray*}
\lambda_{1,\varphi}(M):=\lim\limits_{\ell\rightarrow\infty}\lambda_{1,\varphi}\left(B_{M}(q,\ell)\right)=\lim\limits_{\ell\rightarrow\infty}\lambda^{\ast}_{1,\varphi}\left(B_{M}(q,\ell)\right),
\end{eqnarray*}
which is independent of the choice of the point $q$ and can be seen
as a generalization of $\lambda_{1}(M)$. Clearly,
$\lambda_{1,\varphi}(M)\geq0$ and if $\varphi=const.$, then
$\lambda_{1,\varphi}(M)=\lambda_{1}(M)$. Based on Schoen-Yau's
suggestion mentioned before, it is natural to ask:   \vspace{2mm}
$\\$  \textbf{Question 1}. \emph{For a given complete noncompact
Riemannian manifold $M$, under what conditions,
$\lambda_{1,\varphi}(M)>0$?} \vspace{2mm}
 $\\$For an
$n$-dimensional ($n\geq2$) complete noncompact submanifold of a
hyperbolic space whose norm of the mean curvature vector $\|H\|$
satisfies $\|H\|\leq\alpha<n-1$, Du and Mao \cite[Theorem 1.7]{dm2}
proved that if $\|\varphi\|\leq C$ \footnote{It is easy to know that
the constant $C$ satisfies $C<n-1-\alpha$, which is the potential
assumption in \cite[Theorem 1.7]{dm2}, since in the proof of
\cite[Theorem 1.7]{dm2}, the positive number $\epsilon$ is chosen to
be $\epsilon=(n-1-\alpha-C)/2$.}, then
$\lambda_{1,\varphi}(M)\geq\frac{(n-1-\alpha-C)^2}{4}$, with
equality attained when $M$ is totally geodesic and $\varphi=const.$,
which generalized Cheung-Leung's and Mckean's conclusions mentioned
before.

Consider the BVP
\begin{eqnarray}  \label{1-3}
\left\{
\begin{array}{ll}
\Delta_{p}u+\lambda|u|^{p-2}u=0  ~\qquad & {\rm{in}}~\Omega,\\
  u=0   ~ \qquad & {\rm{on}}~\partial\Omega,
  \end{array}
\right.
\end{eqnarray}
where $\Omega\subset M$ is an open bounded connected domain in a
given Riemannian manifold $M$,
$\Delta_{p}u:={\rm{div}}(\|\nabla{u}\|^{p-2}\nabla{u})$ is the
nonlinear $p$-Laplacian of $u$ with $1<p<\infty$. It is known that
(\ref{1-3}) has a positive weak solution, which is unique modulo the
scaling, in $W^{1,p}_{0}(\Omega)$, the completion of the set
$C^{\infty}_{0}(\Omega)$ of smooth functions compactly supported on
$\Omega$ under the Sobolev norm
$\|u\|_{1,p}=\{\int_{\Omega}(|u|^{p}+\|\nabla
u\|^{p})\}^{\frac{1}{p}}$, and the first Dirichlet eigenvalue
$\lambda_{1,p}(\Omega)$ of the $p$-Laplacian in the eigenvalue
problem (\ref{1-3}) can be characterized by
\begin{eqnarray*}
\lambda_{1,p}(\Omega)=\inf\left\{\frac{\int_{\Omega}\|\nabla
f\|^{p}}{\int_{\Omega}|f|^{p}}\Big{|}f\in
W^{1,p}_{0}(\Omega),f\neq0\right\}.
\end{eqnarray*}
The (closed or Dirichlet) eigenvalue problem of the $p$-Laplacian
has been studied by the first author here and some interesting
conclusions have been obtained (see, e.g., \cite{dm,dm2,m1,m2}).
Domain monotonicity of eigenvalues with vanishing Dirichlet data
also holds for the first Dirichlet eigenvalue of $\Delta_{p}$ (see,
e.g., \cite[Lemma 1.1]{dm2}). This implies that for a complete
noncompact Riemannian manifold $M$, one can
 define the following limit
\begin{eqnarray*}
\lambda_{1,p}(M):=\lim\limits_{\ell\rightarrow\infty}\lambda_{1,p}\left(B_{M}(q,\ell)\right),
\end{eqnarray*}
which is independent of the choice of the point $q$ and can be seen
as a generalization of $\lambda_{1}(M)$. Clearly,
$\lambda_{1,p}(M)\geq0$ and if $p=2$, then
$\lambda_{1,p}(M)=\lambda_{1}(M)$. Based on Schoen-Yau's suggestion
mentioned before, it is natural to ask :   \vspace{2mm} $\\$
\textbf{Question 2}. \emph{For a given complete noncompact
Riemannian manifold $M$, under what conditions,
$\lambda_{1,p}(M)>0$?} \vspace{2mm}
 $\\$ For an $n$-dimensional ($n\geq2$) complete
noncompact submanifold of a hyperbolic space whose norm of the mean
curvature vector $\|H\|$ satisfies $\|H\|\leq\alpha<n-1$, Du and Mao
\cite[Theorem 1.3]{dm2} proved
$\lambda_{1,p}(M)\geq\left(\frac{n-1-\alpha}{p}\right)^{p}>0$, with
equality attained when $M$ is totally geodesic and $p=2$, which
generalized Cheung-Leung's and Mckean's conclusions mentioned
before.

The purpose of this paper is trying to positively answer Questions 1
and 2 \emph{further}. In fact, we have obtained the following facts:
\begin{itemize}

\item By introducing a quantity $c(\Omega)$ for a domain $\Omega$ with
compact closure (see Definition \ref{dcq}), Bessa-Montenegro type
lower bounds for the weighted fundamental tone
$\lambda_{1,\varphi}^{\ast}(\Omega)$ and the first eigenvalue
$\lambda_{1,p}(\Omega)$ of the $p$-Laplacian can be obtained - see
Lemma \ref{lemma2-3}. By applying Hessian Comparison Theorem, Domain
monotonicity of eigenvalues with vanishing Dirichlet data for
$\lambda_{1,\varphi}^{\ast}(\cdot)$ and $\lambda_{1,p}(\cdot)$,
Bessa-Montenegro type lower bounds would give us Mckean-type lower
bounds for Hadamard manifolds with strictly negative sectional
curvature - see Lemma \ref{lemma2-8}.

\item Let $\phi:M\rightarrow Q$  be an isometric immersion from $n$-dimensional ($n\geq2$) Riemannian manifold to an $m$-dimensional Riemannian manifold,
 and moreover, $M$ has locally bounded mean curvature (see Definition
 \ref{lbmc}). For any connected component $\Omega$ of
$\phi^{-1}(\overline{B_{Q}(q,r)})$ with $q\in Q\backslash \phi(M)$,
and $r>0$, under different assumptions on sectional curvatures, some
strictly positive lower bounds have been obtained for the
fundamental tone $\lambda_{1,\varphi}^{\ast}(\Omega)$ (no matter
$\Omega$ is bounded or unbounded) and the first eigenvalue
$\lambda_{1,p}(\Omega)$ of the $p$-Laplacian (in this case, $\Omega$
is bounded and has piecewise smooth boundary) - see Theorem
\ref{theorem3-2}. As a direct consequence, if furthermore $M$ is
noncompact with bounded mean curvature (stronger than the
\emph{locally} bounded mean curvature assumption) and the sectional
curvature of $Q$ is bounded from above by some strictly negative
constant, then $\lambda_{1,\varphi}(M)$ and $\lambda_{1,p}(M)$ have
strictly positive lower bounds - see Corollary \ref{corollary3-4}.

\item Recently, because of the discovery of many interesting examples of minimal surfaces in product spaces $N\times\mathbb{R}$ (see, e.g., \cite{mr1,mr2}), the study of this kind of spaces has attracted
geometers' attention. Based on this, we investigate submanifolds
$\Omega$, with locally bounded mean curvature, of
$N\times\mathbb{R}$ and would like to know ``\emph{under what
conditions, $\lambda_{1,\varphi}^{\ast}(\Omega)>0$ and
$\lambda_{1,p}(\Omega)>0$?}". A positive answer has been given - see
Theorem \ref{theorem4-4} for details.

\item For an $n$-dimensional ($n\geq2$) complete non-compact $\varphi$-minimal submanifold $M$ of the
weighted manifold $\left(\mathbb{H}^{m}(-1),e^{-\varphi}dv\right)$,
where $\mathbb{H}^{m}(-1)$ is the hyperbolic $m$-space with
sectional curvature $-1$, $\varphi$ is a real-valued smooth function
on $\mathbb{H}^{m}(-1)$ and $dv$ is the volume element, a strictly
positive lower bound has been obtained for the first eigenvalue
$\lambda_{1,p}(M)$ of the $p$-Laplacian on $M$ - see Theorem
\ref{theorem-5-main} for details.

\item Interesting \emph{new} lower
bounds for the first Dirichlet eigenvalues of the weighted Laplacian
and the $p$-Laplacian on geodesic balls of complete Riemannian
manifolds have been given - see Theorem \ref{theorem6-1} for
details.

\end{itemize}

\section{Bessa-Montenegro type and Mckean-type lower bounds for the weighted fundamental tone and the first eigenvalue of the $p$-Laplacian}
\renewcommand{\thesection}{\arabic{section}}
\renewcommand{\theequation}{\thesection.\arabic{equation}}
\setcounter{equation}{0} \setcounter{maintheorem}{0}

By using a notion introduced in \cite{bm}, we can give lower bounds
for the weighted fundamental tone for arbitrary bounded domains, and
the lowest eigenvalue  for the Dirichlet eigenvalue problem of the
weighted Laplacian and the $p$-Laplacian on normal domains.

\begin{definition}  \label{dcq} (\cite{bm})
Let $\Omega\subset M$ be a domain with compact closure in a
$C^{\infty}$ Riemannian manifold $M$. Let $\mathcal{X}(\Omega)$ be
the set of all smooth vector fields $X$ on $\Omega$ with
$\|X\|_{\infty}:=\sup_{\Omega}\|X\|<\infty$ and $\inf\rm{div}X>0$
with $\rm{div}$ the divergence operator on $M$. Define $c(\Omega)$
by
 \begin{eqnarray} \label{OD}
 c(\Omega):=\sup\left\{\frac{\inf\rm{div}X}{\|X\|_{\infty}}:X\in\mathcal{X}(\Omega)\right\}.
 \end{eqnarray}
\end{definition}

\begin{remark}
\rm{ As shown in \cite[Remark 2.2]{bm}, it is easy to get that
$\mathcal{X}(\Omega)$ is not empty. This is because the boundary
value problem (BVP for short)
\begin{eqnarray*}
\left\{
\begin{array}{ll}
\Delta u=1,~~~~\rm{in}~~\Omega\\[2mm]
u=0,~~\quad\rm{on}~~\partial\Omega &\quad
\end{array}
\right.
\end{eqnarray*}
always has a solution on a bounded domain $\Omega\subset M$, and
then at least one can choose $X=\nabla{u}$, the gradient of $u$,
which implies that ${\rm{div}}(X)=1$ and $\|X\|<\infty$.
 }
\end{remark}

Now, we can prove the following.

\begin{lemma} \label{lemma2-3}
Let $\Omega\subset M$ be a domain with compact closure and nonempty
boundary (i.e., $\partial\Omega\neq\emptyset$) in a Riemannian
manifold $M$. Then we have
 \begin{eqnarray*}
\lambda^{\ast}_{1,\varphi}(\Omega)\geq\frac{\left(c(\Omega)-c^{+}\right)^{2}}{4}>0
 \end{eqnarray*}
 provided $\|\nabla\varphi\|\leq c^{+}< c(\Omega)$, where $c^{+}$ is the
 supremum of the norm of the gradient of $\varphi$ and is strictly less than $c(\Omega)$, and $c(\Omega)$ is given by
 (\ref{OD}). Moreover, if furthermore the boundary $\partial\Omega$
 is piecewise smooth, then we have
 \begin{eqnarray*}
\lambda_{1,p}(\Omega)\geq\left(\frac{c(\Omega)}{p}\right)^{p}>0.
\end{eqnarray*}
\end{lemma}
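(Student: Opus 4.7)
The plan is to adapt the test-function argument of Bessa-Montenegro \cite{bm} to both the weighted and nonlinear Rayleigh quotients. The common core is: for any $X \in \mathcal{X}(\Omega)$ and any $f \in C^\infty_0(\Omega)$, apply the divergence theorem to a suitable compactly supported vector field built from $f$ and $X$, use Cauchy-Schwarz or H\"older on the resulting cross term, divide by $\|X\|_\infty$, and take the supremum over $X$ so that $\inf_\Omega \mathrm{div}(X)/\|X\|_\infty$ is replaced by $c(\Omega)$. Density of $C^\infty_0(\Omega)$ in $W^{1,2}_0(\Omega)$ and $W^{1,p}_0(\Omega)$ then lets us pass to the infimum in the Rayleigh characterizations.

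For the weighted assertion I would start from the identity
\begin{equation*}
0 = \int_\Omega \mathrm{div}(e^{-\varphi} f^2 X) = \int_\Omega e^{-\varphi}\bigl[f^2 \mathrm{div}_\varphi(X) + 2 f \langle \nabla f, X\rangle\bigr],
\end{equation*}
where $\mathrm{div}_\varphi(X) := \mathrm{div}(X) - \langle \nabla \varphi, X\rangle$. The hypothesis $\|\nabla \varphi\| \leq c^+$ forces $\mathrm{div}_\varphi(X) \geq \inf_\Omega \mathrm{div}(X) - c^+ \|X\|_\infty$, so combining this with Cauchy-Schwarz on $\bigl|\int f\langle \nabla f, X\rangle e^{-\varphi}\bigr|$ yields
\begin{equation*}
\Bigl(\tfrac{\inf_\Omega \mathrm{div}(X)}{\|X\|_\infty} - c^+\Bigr)\int_\Omega f^2 e^{-\varphi} \leq 2 \Bigl(\int_\Omega f^2 e^{-\varphi}\Bigr)^{1/2}\Bigl(\int_\Omega \|\nabla f\|^2 e^{-\varphi}\Bigr)^{1/2}.
\end{equation*}
Squaring and taking $\sup_{X \in \mathcal{X}(\Omega)}$ on the left-hand side, then invoking the Rayleigh characterization of $\lambda^{\ast}_{1,\varphi}(\Omega)$ on the right, produces $(c(\Omega)-c^+)^2/4 \leq \lambda^{\ast}_{1,\varphi}(\Omega)$.

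For the $p$-Laplacian assertion, I would apply the divergence theorem to $|f|^p X$ to obtain
\begin{equation*}
\int_\Omega |f|^p \mathrm{div}(X) = -p \int_\Omega |f|^{p-2} f \langle \nabla f, X\rangle,
\end{equation*}
bound the absolute value of the right-hand side by $p\|X\|_\infty \int_\Omega |f|^{p-1}\|\nabla f\|$, and apply H\"older with exponents $p/(p-1)$ and $p$ to split this as $\bigl(\int_\Omega |f|^p\bigr)^{(p-1)/p}\bigl(\int_\Omega \|\nabla f\|^p\bigr)^{1/p}$. Dividing by $\|X\|_\infty$ and by $\bigl(\int_\Omega|f|^p\bigr)^{(p-1)/p}$, raising to the $p$-th power, and then taking $\sup_X$ and $\inf_f$ as before produces $\lambda_{1,p}(\Omega) \geq (c(\Omega)/p)^p$.

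There is no deep obstacle: the argument is a clean test-function computation, and the assumption $c^+ < c(\Omega)$ is precisely what is needed to make the bound strictly positive after taking the supremum over $X$. The only technical care required is in the passage from $C^\infty_0$ test functions to $W^{1,q}_0$ by density, for which the Sobolev chain rule $\|\nabla |f|\| = \|\nabla f\|$ a.e.\ handles the absolute values appearing in the $p$-Laplacian case; the piecewise-smooth boundary hypothesis in the second part is what lets one identify the Rayleigh-quotient infimum with the first Dirichlet eigenvalue $\lambda_{1,p}(\Omega)$ of the BVP (\ref{1-3}).
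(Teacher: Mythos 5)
Your proposal is correct and follows essentially the same route as the paper: both apply the divergence theorem to the compactly supported fields $f^2Xe^{-\varphi}$ and $|f|^pX$, control the cross term, and take the supremum over $X\in\mathcal{X}(\Omega)$; your use of integral Cauchy--Schwarz/H\"older is just the integrated form of the paper's pointwise Young's inequality with an optimally chosen parameter $\epsilon$, and yields the identical bounds. The only point worth making explicit is that when you square the weighted inequality you must restrict to those $X$ with $\inf\mathrm{div}X/\|X\|_\infty>c^{+}$ (which exist since $c^{+}<c(\Omega)$), exactly as the paper's choice of $\epsilon>0$ implicitly requires.
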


\begin{proof}
Taking $f\in\ C^{\infty}_{0}(\Omega)$, the set of all smooth
functions compactly supported on $\Omega$, and
$X\in\mathcal{X}(\Omega)$. By a direct calculation, we have
\begin{eqnarray} \label{2-1}
{\rm{div}}(|f|^{p}X)&=&\langle\nabla|f|^{p},X\rangle+|f|^{p}{\rm{div}}X\nonumber\\
&\geq&\ \ -p|f|^{p-1}\|\nabla
f\|{\rm{sup}}\|X\|+\inf{\rm{div}}X\cdot|f|^{p}.
\end{eqnarray}
By Young's inequality, one can obtain
\begin{eqnarray*}
 |f|^{p-1}\|\nabla
f\|=\epsilon|f|^{p-1}\cdot\frac{\|\nabla
f\|}{\epsilon}\leq\frac{\left(\frac{\|\nabla
f\|}{\epsilon}\right)^{p}}{p}+\frac{\left(\epsilon|f|^{p-1}\right)^{\frac{p}{p-1}}}{\frac{p}{p-1}},
\end{eqnarray*}
where $\epsilon>0$ is a parameter determined later. Substituting the
above inequality into (\ref{2-1}) yields
 \begin{eqnarray} \label{2-2}
{\rm{div}}(|f|^{p}X)\geq-p\sup\|X\|\left[\frac{\left(\frac{\|\nabla
f\|}{\epsilon}\right)^{p}}{p}+\frac{(\epsilon|f|^{p-1})^{\frac{p}{p-1}}}{\frac{p}{p-1}}\right]+\inf{\rm{div}}X\cdot|f|^{p}.
\end{eqnarray}
Choosing
 \begin{eqnarray*}
 \epsilon=\left(\frac{\inf{\rm{div}}X}{p\sup\|X\|}\right)^{\frac{p-1}{p}},
 \end{eqnarray*}
 in (\ref{2-2}), integrating both sides of (\ref{2-2}) over $\Omega$
 and using the divergence theorem, we have
 \begin{eqnarray} \label{add-1}
\int_{\Omega}\|\nabla
f\|^{p}\geq\left(\frac{\inf{\rm{div}}X}{p\sup\|X\|}\right)^{p}\int_{\Omega}|f|^{p},
\end{eqnarray}
which implies
\begin{eqnarray*}
\lambda_{1,p}(\Omega)\geq\left(\frac{c(\Omega)}{p}\right)^{p}
\end{eqnarray*}
by taking the supremum over all vector fields
$X\in\mathcal{X}(\Omega)$ to the RHS of (\ref{add-1}).

 If $\|\nabla \varphi\|\leq c^{+}<c(\Omega)$ with $c^{+}\geq0$ the
supremum of $\|\nabla \varphi\|$, then we have
\begin{eqnarray} \label{2-3}
{\rm{div}}(f^{2}Xe^{-\varphi})&=&e^{-\varphi}\langle\nabla f^{2},X\rangle+f^{2}e^{-\varphi}{\rm{div}}X-f^{2}e^{-\varphi} \langle \nabla \varphi,X \rangle\nonumber\\
&\geq&\ \ e^{-\varphi}\left[-2|f|\cdot\|\nabla
f\|\cdot\sup\|X\|+f^{2}\inf{\rm{div}}X-f^{2}c^{+}\sup\|X\|\right]\nonumber\\
&\geq& \ e^{-\varphi}\left[\left(-\epsilon f^{2}-\frac{\|\nabla
f\|^{2}}{\epsilon}\right)\sup\|X\|+f^{2}\inf{\rm{div}}X-f^{2}c^{+}\sup\|X\|\right],
\end{eqnarray}
where $\epsilon>0$ is a parameter determined later. Integrating both
sides of (\ref{2-3}) and using the divergence theorem, we have
\begin{eqnarray} \label{2-4}
\int_{\Omega}\|\nabla
f\|^{2}e^{-\varphi}\geq\frac{\epsilon(\inf{\rm{div}}X-c^{+}\sup\|X\|-\epsilon\sup\|X\|)}{\sup\|X\|}\int_{\Omega}f^{2}e^{-\varphi}.
\end{eqnarray}
On the other hand, since
\begin{eqnarray*}
\frac{\epsilon(\inf{\rm{div}}X-c^{+}\sup\|X\|-\epsilon\sup\|X\|)}{\sup\|X\|}\leq\left(\frac{\frac{\inf{\rm{div}}X}{\sup\|X\|}-c^{+}}{2}\right)^{2}
\end{eqnarray*}
with equality holds if and only if
$\epsilon=\frac{\inf{\rm{div}}X}{2\sup\|X\|}-\frac{c^{+}}{2}>0$, we
can obtain
\begin{eqnarray*}
 \lambda_{1,\varphi}(\Omega)\geq \frac{(c(\Omega)-c^{+})^{2}}{4}>0
\end{eqnarray*}
by choosing
$\epsilon=\frac{\inf{\rm{div}}X}{2\sup\|X\|}-\frac{c^{+}}{2}$ in
(\ref{2-4}) and by taking the supremum over all vector fields
$X\in\mathcal{X}(\Omega)$. This completes the proof of Lemma
\ref{lemma2-3}.
\end{proof}

\begin{remark} \label{remark2-4}
\rm{ (1) Clearly, when $p=2$ (or $\varphi=const.$), the nonlinear
$p$-Laplacian (or the weighted Laplacian) degenerates into the
Laplacian. Correspondingly,
$\lambda_{1,p}(\Omega)=\lambda_{1}^{\ast}(\Omega)$ (or
$\lambda_{1,\varphi}(\Omega)=\lambda_{1}^{\ast}(\Omega)$,
$c^{+}=0$), and moreover,
$\lambda_{1}^{\ast}(\Omega)\geq\left(\frac{c(\Omega)}{2}\right)^2$,
which is the lower bound for $\lambda_{1}^{\ast}(\Omega)$ in
\cite[Lemma 2.3]{bm} given by Bessa and Montenegro. Based on this
fact, we would like to use  \emph{Bessa-Montenegro type lower
bounds} to call the lower bounds for the lowest Dirichlet eigenvalue
(resp., the weighted fundamental tone) shown in Lemma
\ref{lemma2-3}. Besides, to prove Bessa-Montenegro type lower bounds
here, we only need to consider vector fields smooth almost every in
$\Omega$ such that $\int_{\Omega}{\rm{div}}(|f|^{p}X)=0$ or
$\int_{\Omega}{\rm{div}}(f^{2}Xe^{-\varphi})=0$ for all $f\in
C^{\infty}_{0}(\Omega)$. \\
(2) It has been shown in \cite[Remark 2.7]{bm} that $c(\Omega)\leq
h(\Omega)$ with
$h(\Omega):=\inf_{A\subset\Omega}\frac{\rm{vol}(\partial
A)}{\rm{vol}(A)}$ the Cheeger's constant. However, in some cases,
for instance, for balls in the Euclidean space or Hadamard
manifolds, $c(\Omega)=h(\Omega)$. The advantage of defining
$c(\Omega)$ is the computability of lower bounds for
$\lambda_{1,p}(\Omega)$, $\lambda_{1,\varphi}(\Omega)$ via any lower
bound for $c(\Omega)$, and this way can be applied to arbitrary
domains. Besides, we can use Lemma \ref{lemma2-3} to derive
\emph{Mckean-type} lower bounds below - see Lemma \ref{lemma2-8} for
details. }
\end{remark}

Applying Lemma \ref{lemma2-3}, one can get the following conclusion
directly.

\begin{corollary}
Let $\Omega\subset M$ be a normal domain with compact closure in a
smooth Riemannian manifold $M$. For the BVP
\begin{eqnarray*}
\left\{
\begin{array}{ll}
\Delta v=1,~~~~\rm{in}~~\Omega,\\[2mm]
v=0,~~~~\quad\rm{on}~~~\partial\Omega, &\quad
\end{array}
\right.
\end{eqnarray*}
we have
\begin{eqnarray*}
\lambda_{1,p}(\Omega)\geq\left(\frac{1}{p\|\nabla{v}\|_{\infty}}\right)^{p}>0.
\end{eqnarray*}
Besides,
 \begin{eqnarray*}
\lambda^{\ast}_{1,\varphi}(\Omega)\geq\frac{\left(\frac{1}{\|\nabla{v}\|_{\infty}}-c^{+}\right)^{2}}{4}>0
 \end{eqnarray*}
 provided $\|\nabla \varphi\|\leq c^{+}< \frac{1}{\|\nabla{v}\|_{\infty}}$,  where $c^{+}$ is the
 supremum of the norm of the gradient of $\varphi$ and is strictly less than
 $\frac{1}{\|\nabla{v}\|_{\infty}}$.
\end{corollary}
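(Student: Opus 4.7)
The plan is to apply Lemma \ref{lemma2-3} directly, after producing a concrete admissible vector field from the solution $v$ of the stated Poisson problem. Since $\Omega$ is a normal domain with compact closure in a smooth Riemannian manifold, standard elliptic theory guarantees that the BVP $\Delta v=1$ in $\Omega$, $v=0$ on $\partial\Omega$, admits a smooth solution, and the gradient estimate $\|\nabla v\|_\infty<\infty$ holds by compactness of $\overline{\Omega}$.

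First I would set $X:=\nabla v$. Then $X$ is a smooth vector field on $\Omega$ with $\|X\|_\infty=\|\nabla v\|_\infty<\infty$, and $\operatorname{div} X=\Delta v=1$, so in particular $\inf_\Omega\operatorname{div} X=1>0$. Therefore $X\in\mathcal{X}(\Omega)$ in the sense of Definition \ref{dcq}, and the ratio appearing in the definition of $c(\Omega)$ evaluates to
\begin{eqnarray*}
\frac{\inf_\Omega\operatorname{div} X}{\|X\|_\infty}=\frac{1}{\|\nabla v\|_\infty}.
\end{eqnarray*}
Taking the supremum in (\ref{OD}) over all admissible vector fields yields the lower bound $c(\Omega)\geq\frac{1}{\|\nabla v\|_\infty}$.

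Next I would feed this estimate into the two inequalities furnished by Lemma \ref{lemma2-3}. The $p$-Laplacian bound $\lambda_{1,p}(\Omega)\geq\bigl(c(\Omega)/p\bigr)^{p}$ immediately gives
\begin{eqnarray*}
\lambda_{1,p}(\Omega)\geq\left(\frac{1}{p\|\nabla v\|_\infty}\right)^{p}>0,
\end{eqnarray*}
as desired. Similarly, assuming $\|\nabla\varphi\|\leq c^{+}<\frac{1}{\|\nabla v\|_\infty}\leq c(\Omega)$, the weighted bound $\lambda_{1,\varphi}^{\ast}(\Omega)\geq\bigl(c(\Omega)-c^{+}\bigr)^{2}/4$ combined with the monotonicity of the function $t\mapsto(t-c^{+})^{2}$ on $[c^{+},\infty)$ delivers
\begin{eqnarray*}
\lambda_{1,\varphi}^{\ast}(\Omega)\geq\frac{\bigl(\tfrac{1}{\|\nabla v\|_\infty}-c^{+}\bigr)^{2}}{4}>0.
\end{eqnarray*}

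There is really no significant obstacle here; the statement is essentially a corollary in the literal sense. The only small point worth verifying is that $\nabla v$ qualifies as an element of $\mathcal{X}(\Omega)$, which requires the regularity of $v$ on a normal domain (ensuring smoothness in $\Omega$) together with the boundedness of $\|\nabla v\|$ on $\overline{\Omega}$. Both are standard, so once $X=\nabla v$ is chosen, the two inequalities fall out mechanically from Lemma \ref{lemma2-3}.
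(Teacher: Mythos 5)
Your proof is correct and matches the paper's intended argument exactly: the paper states this corollary as a direct application of Lemma \ref{lemma2-3}, with the choice $X=\nabla v$ (already flagged in the remark after Definition \ref{dcq}) giving $c(\Omega)\geq 1/\|\nabla v\|_{\infty}$. Nothing further is needed.
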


\begin{corollary} \label{IMPORTANT}
There are no smooth bounded vector fields $X:M\rightarrow TM$ with
$\inf_{M}{\rm{div}}X>0$ on complete noncompact manifolds $M$ such
that $\lambda_{1,p}(M)=0$ and $\lambda_{1,\varphi}(M)=0$. In
particular, there is no such vector field on $\mathbb{R}^n$.
\end{corollary}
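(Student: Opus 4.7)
The plan is to prove the corollary by contraposition: if a smooth bounded vector field $X:M\to TM$ with $\delta:=\inf_{M}\mathrm{div}X>0$ exists on a complete noncompact manifold $M$, then $\lambda_{1,p}(M)>0$, and, under the natural compatibility condition $\|\nabla\varphi\|_\infty<\delta/\|X\|_\infty$, also $\lambda_{1,\varphi}(M)>0$. This reduces the corollary to a direct application of Lemma~\ref{lemma2-3} along a smooth exhaustion of $M$, combined with domain monotonicity.

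The first step is to fix an exhaustion $\{\Omega_\ell\}_{\ell\in\mathbb{N}}$ of $M$ by relatively compact open sets with piecewise smooth boundary satisfying $\overline{\Omega_\ell}\subset\Omega_{\ell+1}$ and $\bigcup_\ell\Omega_\ell=M$; such exhaustions exist on any smooth paracompact manifold. Setting $K:=\|X\|_\infty<\infty$, the restriction $X|_{\Omega_\ell}$ lies in $\mathcal{X}(\Omega_\ell)$ and satisfies $\inf_{\Omega_\ell}\mathrm{div}X\geq\delta$, so by definition~(\ref{OD}) one has the uniform lower bound $c(\Omega_\ell)\geq\delta/K=:c_0>0$, independent of $\ell$.

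Applying Lemma~\ref{lemma2-3} on each $\Omega_\ell$ then gives $\lambda_{1,p}(\Omega_\ell)\geq(c_0/p)^p$, and, whenever $c^+:=\|\nabla\varphi\|_\infty<c_0$, also $\lambda^{*}_{1,\varphi}(\Omega_\ell)\geq((c_0-c^+)/2)^2$. By domain monotonicity (recalled earlier in the paper for all three operators), each geodesic ball $B_M(q,\ell)$ can be sandwiched between two members of the exhaustion, so the monotone limits of $\lambda_{1,p}(\Omega_\ell)$ and $\lambda_{1,\varphi}^{*}(\Omega_\ell)$ agree with $\lambda_{1,p}(M)$ and $\lambda_{1,\varphi}(M)$ respectively. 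Letting $\ell\to\infty$ therefore yields $\lambda_{1,p}(M)\geq(c_0/p)^p>0$ and $\lambda_{1,\varphi}(M)\geq((c_0-c^+)/2)^2>0$, contradicting the standing hypothesis.

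For the particular case $M=\mathbb{R}^n$, it suffices to recall that $\lambda_{1,p}(\mathbb{R}^n)=0$ for every $p\in(1,\infty)$, which one checks by plugging rescaled cutoffs $\eta(x/R)$ (with $\eta\in C_0^\infty(\mathbb{R}^n)$ nontrivial) into the $p$-Rayleigh quotient and noting that the quotient decays like $R^{-p}\to 0$. The only real bookkeeping subtlety is the transition from the ``geodesic-ball'' definitions of $\lambda_{1,p}(M)$ and $\lambda_{1,\varphi}(M)$ to an exhaustion by domains of piecewise smooth boundary so that Lemma~\ref{lemma2-3} is applicable; once domain monotonicity is invoked this step is routine, and there is no genuinely hard obstacle in the argument.
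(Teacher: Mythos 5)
Your proof is correct and follows essentially the paper's (implicit) argument: the corollary is stated there without proof as a direct consequence of Lemma \ref{lemma2-3} applied along an exhaustion together with domain monotonicity, which is exactly what you carry out. Note only that to refute the conjunction $\lambda_{1,p}(M)=0$ and $\lambda_{1,\varphi}(M)=0$ your unconditional bound $\lambda_{1,p}(M)\geq(c_0/p)^p>0$ already suffices, so the compatibility condition $c^{+}<c_0$ you impose for the weighted case is not actually needed.
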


As an interesting application of Lemma \ref{lemma2-3}, we can obtain
Mckean-type lower bounds for the first eigenvalue of the drifting
Laplacian and the $p$-Laplacian on the prescribed Hadamard manifold.
However, in order to prove that, we need to use \emph{Hessian
Comparison Theorem} below.

\begin{theorem} \label{HCT}
(Hessian Comparison Theorem) Let $M$ be a complete Riemannian
manifold and $x_{0},x\in M$.  Let $\gamma:[0,\rho(x)]\rightarrow M$
be a minimizing geodesic joining $x_{0}$ and $x$, where $\rho(x)$ is
the distance function ${\rm{dist}}_{M}(x_{0},x)$. Let $K$ be the
sectional curvatures of $M$ and $\mu_{i}(\rho)$, $i=0,1$, be
functions defined as below
\begin{eqnarray*}
\mu_{0}(\rho)=\left\{
\begin{array}{lll}
k_{0}\coth(k_{0}\rho(x)),~~~~\mathrm{if}~~\inf_{\gamma}K=-k_{0}^2,\\[2mm]
\frac{1}{\rho(x)},~~~~~~\qquad\qquad\mathrm{if}~~\inf_{\gamma}K=0,\\[2mm]
k_{0}\cot(k_{0}\rho(x)),~~~~~~\mathrm{if}~~\inf_{\gamma}K=k_{0}^{2}~\mathrm{and}~\rho<\frac{\pi}{2k_{0}}
&\quad
\end{array}
\right.
\end{eqnarray*}
and
\begin{eqnarray*}
\mu_{1}(\rho)=\left\{
\begin{array}{lll}
k_{1}\coth(k_{1}\rho(x)),~~~~\mathrm{if}~~\sup_{\gamma}K=-k_{1}^2,\\[2mm]
\frac{1}{\rho(x)},~~~~~~\qquad\qquad\mathrm{if}~~\sup_{\gamma}K=0,\\[2mm]
k_{1}\cot(k_{1}\rho(x)),~~~~~~\mathrm{if}~~\sup_{\gamma}K=k_{1}^{2}~\mathrm{and}~\rho<\frac{\pi}{2k_{1}}.
&\quad
\end{array}
\right.
\end{eqnarray*}
 Then the Hessian of $\rho$ and $\rho^{2}$ satisfy
\begin{eqnarray*}
&&\mu_{1}(\rho(x))\cdot\|X\|^{2}\leq\mathrm{Hess}\rho(x)(X,X)\leq\mu_{0}(\rho(x))\cdot\|X\|^{2},\\
&&\mathrm{Hess}\rho(x)(\gamma',\gamma')=0,\\
&&2\rho(x)\cdot\mu_{1}(\rho(x))\cdot\|X\|^{2}\leq\mathrm{Hess}\rho^{2}(x)(X,X)\leq2\rho(x)\cdot\mu_{0}(\rho(x))\cdot\|X\|^{2},\\
&&\mathrm{Hess}\rho^{2}(x)(\gamma',\gamma')=2,
\end{eqnarray*}
where $X$ is any vector in $T_{x}M$ perpendicular to
$\gamma'(\rho(x))$.
\end{theorem}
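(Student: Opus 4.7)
The plan is to split the theorem into two parts: the identities involving the radial direction $\gamma'$ follow from elementary differentiation, while the comparison estimates for vectors perpendicular to $\gamma'$ reduce to a Jacobi-field analysis controlled by Rauch's theorem. First, since $\|\nabla\rho\|^{2}\equiv 1$ off the cut locus, differentiating yields $\mathrm{Hess}\,\rho(\nabla\rho,\cdot)\equiv 0$, and evaluation along $\gamma$ (where $\nabla\rho=\gamma'$) gives $\mathrm{Hess}\,\rho(x)(\gamma',\gamma')=0$. From $\nabla\rho^{2}=2\rho\nabla\rho$ and the product rule I would obtain
\[
\mathrm{Hess}\,\rho^{2}(X,Y)=2\langle X,\nabla\rho\rangle\langle Y,\nabla\rho\rangle+2\rho\,\mathrm{Hess}\,\rho(X,Y),
\]
which specializes to $\mathrm{Hess}\,\rho^{2}(x)(\gamma',\gamma')=2$ and, when $X\perp\gamma'$, to $\mathrm{Hess}\,\rho^{2}(x)(X,X)=2\rho(x)\,\mathrm{Hess}\,\rho(x)(X,X)$. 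Consequently the bounds on $\mathrm{Hess}\,\rho^{2}$ follow at once from those on $\mathrm{Hess}\,\rho$.

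Next, for $X\in T_{x}M$ perpendicular to $\gamma'(\rho(x))$, I would construct the unique Jacobi field $J$ along $\gamma$ with $J(0)=0$ and $J(\rho(x))=X$, viewed as the variation field of a one-parameter family of geodesics emanating from $x_{0}$. The standard identification of the Hessian of the distance function with the shape operator of the geodesic sphere $\{y:\rho(y)=\rho(x)\}$ gives
\[
\mathrm{Hess}\,\rho(x)(X,X)=\langle\nabla_{X}\nabla\rho,X\rangle=\langle J'(\rho(x)),X\rangle,
\]
which also equals the index form $I_{\gamma}(J,J)$. This reduces the problem to a Sturm-type comparison for the Jacobi equation $J''+R(J,\gamma')\gamma'=0$.

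Now the Rauch comparison theorem applies directly. If $\sup_{\gamma}K\leq-k_{1}^{2}$ (respectively $0$, respectively $k_{1}^{2}$ with $\rho<\pi/(2k_{1})$), one compares $J$ with the model Jacobi fields $\sinh(k_{1}t)/k_{1}$, $\,t$, and $\sin(k_{1}t)/k_{1}$ respectively, producing the lower bound $\mathrm{Hess}\,\rho(x)(X,X)\geq\mu_{1}(\rho(x))\|X\|^{2}$. The upper bound $\mathrm{Hess}\,\rho(x)(X,X)\leq\mu_{0}(\rho(x))\|X\|^{2}$ is obtained symmetrically by the reverse Rauch comparison applied to $\inf_{\gamma}K$. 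In each case the comparison functions $\mu_{0},\mu_{1}$ appear as $f'/f$ evaluated at $\rho(x)$ for the corresponding model solution of $f''+cf=0$.

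The main obstacle is ensuring that $x$ lies inside the cut locus of $x_{0}$, so that $\rho$ is smooth at $x$ and a unique Jacobi field with the prescribed boundary data exists; the hypothesis $\rho<\pi/(2k_{i})$ in the positive-curvature case is precisely what prevents a conjugate point from appearing along $\gamma$ and keeps the sign of the comparison correct. A secondary, more notational difficulty is uniformizing the three cases $-k^{2},\,0,\,k^{2}$ by treating the one-parameter family of solutions to $f''+cf=0$ with $c\in\mathbb{R}$, so that the final lower and upper bounds take the compact form stated in terms of $\mu_{0}$ and $\mu_{1}$.
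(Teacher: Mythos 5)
The paper does not prove Theorem \ref{HCT} at all: it is quoted as the classical Hessian Comparison Theorem and used as a black box, so there is no internal proof to compare against. Your outline is the standard argument and is essentially correct: the radial identities from $\|\nabla\rho\|^2\equiv 1$ and the product rule for $\rho^2$, the identification $\mathrm{Hess}\,\rho(X,X)=\langle J'(\rho(x)),X\rangle=I_\gamma(J,J)$ via the Jacobi field with $J(0)=0$, $J(\rho(x))=X$, and then Rauch (for the upper curvature bound $\sup_\gamma K$, giving the lower bound $\mu_1$) together with the reverse Rauch/Berger comparison or, equivalently, the index lemma with the test field $W=\frac{f(t)}{f(\rho)}E(t)$ built from the model solution of $f''+cf=0$ (for the lower curvature bound $\inf_\gamma K$, giving the upper bound $\mu_0$). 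Two small points: it would be cleaner to say explicitly that the upper bound comes from the index lemma (Jacobi fields minimize the index form among fields with the same endpoint data), since Rauch in its basic form only gives the monotonicity $\frac{d}{dt}\log\|J\|\geq f'/f$ needed for the lower bound; and the restriction $\rho<\pi/(2k)$ in the positive-curvature case is not what excludes conjugate points (those first occur at $\rho=\pi/k$ in the model) but rather what keeps $\cot(k\rho)$ positive, which is what the paper needs in its applications. Neither issue is a genuine gap.
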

Hence, by applying Theorem \ref{HCT}, for the distance function
$\rho(x)$ on an $n$-dimensional Riemannian manifold $M$, we can get
\begin{eqnarray} \label{HCTLI}
2(n-1)\rho(x)\mu_{1}(\rho(x))+2\leq\Delta\rho^{2}(x)\leq2(n-1)\rho(x)\mu_{0}(\rho(x))+2.
\end{eqnarray}

\begin{lemma} \label{lemma2-8}
Let $M$ be an $n$-dimensional ($n\geq2$) Hadamard manifold whose
sectional curvature satisfies $K_{M}\leq-a^{2}<0$, $a>0$. Then we
have
\begin{eqnarray*}
\lambda_{1,p}(M)\geq\left[\frac{(n-1)\cdot a}{p}\right]^{p}>0.
\end{eqnarray*}
Moreover,
\begin{eqnarray*}
\lambda_{1,\varphi}(M)\geq\left[\frac{(n-1)\cdot
a-c^{+}}{2}\right]^{2}>0
\end{eqnarray*}
provided $\|\mathrm{grad}\varphi\|\leq c^{+}< (n-1)a$, where $c^{+}$
is the supremum of the norm of the gradient of $\varphi$ and is
strictly less than $(n-1)a$.
\end{lemma}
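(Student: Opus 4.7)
The natural plan is to combine the Bessa--Montenegro bound of Lemma \ref{lemma2-3} with the Hessian Comparison Theorem \ref{HCT}, applied on the exhausting family of geodesic balls $B_{M}(q,\ell)$, and then to pass to the limit $\ell\to\infty$ using the definitions of $\lambda_{1,p}(M)$ and $\lambda_{1,\varphi}(M)$. The key is to exhibit, on each $B_{M}(q,\ell)$, a vector field $X\in\mathcal{X}(B_{M}(q,\ell))$ with a controlled ratio $\inf\operatorname{div}X/\|X\|_{\infty}$, and the obvious candidate is the radial vector field $X=\nabla\rho$, where $\rho(x)=\operatorname{dist}_{M}(q,x)$. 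Since $M$ is Hadamard, $\rho$ is smooth on $M\setminus\{q\}$ and $\|\nabla\rho\|\equiv 1$, so $\|X\|_{\infty}=1$.

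Next I would estimate $\operatorname{div}X=\Delta\rho$ from below. By $K_{M}\le -a^{2}$ we have $\sup_{\gamma}K\le -a^{2}$, so Theorem \ref{HCT} (with $k_{1}=a$) gives $\operatorname{Hess}\rho(Y,Y)\ge a\coth(a\rho)\|Y\|^{2}$ for every $Y\perp\gamma'$, together with $\operatorname{Hess}\rho(\gamma',\gamma')=0$. Taking the trace,
\begin{equation*}
\Delta\rho(x)\ \ge\ (n-1)\,a\coth(a\rho(x))\ >\ (n-1)\,a\qquad\text{for every }x\in B_{M}(q,\ell)\setminus\{q\}.
\end{equation*}
Hence $\inf\operatorname{div}X\ge(n-1)a$, and so $c(B_{M}(q,\ell))\ge(n-1)a$ for every $\ell>0$.

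Plugging this into Lemma \ref{lemma2-3} yields, for each $\ell$,
\begin{equation*}
\lambda_{1,p}(B_{M}(q,\ell))\ \ge\ \left[\frac{(n-1)a}{p}\right]^{p},\qquad
\lambda^{\ast}_{1,\varphi}(B_{M}(q,\ell))\ \ge\ \left[\frac{(n-1)a-c^{+}}{2}\right]^{2},
\end{equation*}
the second provided $c^{+}<(n-1)a$, which is exactly the hypothesis. Letting $\ell\to\infty$ and using the definitions of $\lambda_{1,p}(M)$ and $\lambda_{1,\varphi}(M)$ via the exhausting sequence of geodesic balls finishes the proof.

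The only genuine obstacle is that $X=\nabla\rho$ is not smooth at the center $q$, so strictly speaking $X\notin\mathcal{X}(B_{M}(q,\ell))$. This is handled by the observation in Remark \ref{remark2-4}(1) that it suffices to use vector fields which are smooth almost everywhere in $\Omega$ and for which the boundary terms $\int_{\Omega}\operatorname{div}(|f|^{p}X)$ and $\int_{\Omega}\operatorname{div}(f^{2}Xe^{-\varphi})$ vanish for all $f\in C^{\infty}_{0}(\Omega)$. For $X=\nabla\rho$ this is checked by excising a small ball $B_{M}(q,\epsilon)$, applying the divergence theorem on the annular region, noting that $\|X\|=1$ and that the boundary integral over $\partial B_{M}(q,\epsilon)$ is $O(\epsilon^{n-1})$ since $f$ is bounded, and then letting $\epsilon\to 0$. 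Once this technical point is cleared, the argument reduces to the two inequalities displayed above.
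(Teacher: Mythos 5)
Your proposal is correct and follows essentially the same route as the paper: take $X=\nabla\rho$, bound $\operatorname{div}X=\Delta\rho\geq (n-1)a\coth(a\rho)\geq (n-1)a$ via the Hessian Comparison Theorem, feed this into Lemma \ref{lemma2-3}, and pass to the limit by domain monotonicity. The only (immaterial) difference is that the paper sidesteps the singularity of $\rho$ by taking the distance function to a point $p\in M\setminus\Omega$ lying outside the domain, whereas you center $\rho$ at the ball's center and handle the singular point by the excision argument of Remark \ref{remark2-4}; both are valid.
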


\begin{proof}
Let $\rho: M\rightarrow\mathbb{R}$ be the distance function to a
point $p\in M\setminus\Omega$ with $\Omega$ a normal domain in $M$,
and let $X=\rm{grad}\rho$. By (\ref{HCTLI}), we have
\begin{eqnarray*}
\Delta\rho(x)={\rm{div}}X\geq(n-1)\cdot
a\cdot\coth(a\cdot\rho(x))\geq(n-1)\cdot a.
\end{eqnarray*}
By Lemma \ref{lemma2-3}, it follows that
\begin{eqnarray*}
\lambda_{1,p}(\Omega)\geq\left[\frac{(n-1)\cdot a}{p}\right]^{p}
\end{eqnarray*}
and
\begin{eqnarray*}
\lambda_{1,\varphi}(\Omega)=\lambda^{\ast}_{1,\varphi}(\Omega)\geq\left[\frac{(n-1)\cdot
a-c^{+}}{2}\right]^{2},
\end{eqnarray*}
which, by \cite[Lemma 1.1]{dm2}, implies the lower bounds for
$\lambda_{1,p}(M)$, $\lambda_{1,\varphi}(M)$ in Lemma
\ref{lemma2-8}.
\end{proof}

\begin{remark}
\rm{ Clearly, when $p=2$ (or $\varphi=const.$), the nonlinear
$p$-Laplacian (or the weighted Laplacian) degenerates into the
Laplacian. Correspondingly, $\lambda_{1,p}(M)=\lambda_{1}(M)$ (or
$\lambda_{1,\varphi}(M)=\lambda_{1}(M)$, $c^{+}=0$), and moreover,
$\lambda_{1}(M)\geq\frac{(n-1)^{2}a^{2}}{4}>0$, which is exactly
Mckean's lower bound shown in \cite{hpm}. }
\end{remark}

\section{Eigenvalue estimates for submanifolds with locally bounded mean curvature in Hadamard manifolds}
\renewcommand{\thesection}{\arabic{section}}
\renewcommand{\theequation}{\thesection.\arabic{equation}}
\setcounter{equation}{0} \setcounter{maintheorem}{0}

Let $\phi:M\rightarrow Q$ be an isometric immersion with $M$, $Q$
complete Riemannian manifolds, $\mathrm{dim}(M)=n$, $n\geq2$.
Consider a smooth function $g:Q\rightarrow \mathbb{R}$ and the
composition $f=g\circ \phi:M\rightarrow \mathbb{R}$. As before, let
$\Delta$ be the
 Laplace operator on $M$. \emph{However, because of the isometric immersion, for convenience, in this section, we can use ${\rm{grad}}(\cdot)$ to denote
 the gradient of a given function on $M$ or its isometric image $\phi(M)\subseteq Q$}. Identify $X$ with $d\phi(X)$, and
then we can obtain that at $q\in M$,
\begin{eqnarray*}
\langle{\rm{grad}}f,X\rangle=df(X)=dg(X)=\langle{\rm{grad}}g,X\rangle
\end{eqnarray*}
for every $X\in T_{q}M$. Therefore, it follows that
\begin{eqnarray*}
{\rm{grad}}g={\rm{grad}}f+({\rm{grad}}g)^{\perp},
\end{eqnarray*}
with $({\rm{grad}}g)^{\perp}$ perpendicular to $T_{q}M$. For $X,Y\in
T_{q}M$, let $\alpha(q)(X,Y)$ and ${\rm{Hess}}f(q)(X,Y)$ be the
second fundamental form of the immersion $\phi$ and the Hessian of
$f$ at $q\in M$, respectively. By the Gauss equation, we have
\begin{eqnarray} \label{3-1}
{\rm{Hess}}f(q)(X,Y)={\rm{Hess}}g(\phi(q))(X,Y)+\langle{\rm{grad}}g,\alpha(X,Y)\rangle_{\phi(q)}.
\end{eqnarray}
Taking the trace in (\ref{3-1}) w.r.t. an orthonormal basis
$\{e_{1},e_{2}\cdots e_{n}\}$ of $T_{q}M$, we can get
\begin{eqnarray} \label{3-2}
\Delta
f(q)=\sum\limits_{i=1}^{n}{\rm{Hess}}f(q)(e_{i},e_{i})=\sum\limits_{i=1}^{n}{\rm{Hess}}g(\phi(q))(e_{i},e_{i})+\left\langle
 {\rm{grad}}g,\sum\limits_{i=1}^{n}\alpha(e_{i},e_{i})\right\rangle.
\end{eqnarray}
See, e.g., \cite{cl,dm2} for more generalized versions of the
formulas (\ref{3-1}) and (\ref{3-2}) above.

We need the following notion.

\begin{definition} \label{lbmc}
An isometric immersion $\phi:M\rightarrow Q$ has locally bounded
mean curvature $H$ if for any $q\in Q$ and $r>0,$ the number
$h(q,r):=\sup\{\|H(x)\|; x\in\phi(M)\cap B_{Q}(q,r)\}$ is finite,
where, similar as before, $B_{Q}(q,r)$ denotes the geodesic ball,
with center $q$ and radius $r$, on $Q$.
\end{definition}

By using Lemma \ref{lemma2-3}, Theorem \ref{HCT} and the locally
bounded mean curvature assumption, we can prove the following.

\begin{theorem} \label{theorem3-2}
Let $\phi:M\rightarrow Q$  be an isometric immersion with locally
bounded mean curvature and let $\Omega$ be any connected component
of $\phi^{-1}(\overline{B_{Q}(q,r)})$, where $q\in Q\backslash
\phi(M)$,  $r>0$ and $\mathrm{dim}(M)=n$, $n\geq2$. Let
$\kappa(q,r)=\sup\{K_{Q}(x)|x\in B_{Q}(q,r)\},$ where $K_{Q}(x)$ is
the sectional curvature at $x$. Denote by $\mathrm{inj}(q)$ the
injectivity radius of $Q$ at the point $q$. Assume that $\varphi$ is
a real-valued smooth function on $M$ with
$\|\mathrm{grad}\varphi\|\leq c^{+}$, where $c^{+}$ is the supremum
of the norm of the gradient of $\varphi$. Choosing $r$ properly, we
have the following estimates:
  $\\$(1) If
$\kappa(q,\mathrm{inj}(q))=k^{2}<\infty$, $k>0$, choose
\begin{eqnarray*}
r<\min\left\{\mathrm{inj}(q),\frac{\pi}{2k},\cot^{-1}\left[\frac{h(q,\mathrm{inj}(q))}{(n-1)k}\right]\Big{/}k\right\}.
\end{eqnarray*}
Then we have
\begin{eqnarray*}
\lambda_{1,\varphi}^{\ast}(\Omega)\geq\left[\frac{(n-1)k\cot(kr)-h(q,r)-c^{+}}{2}\right]^{2}
\end{eqnarray*}
provided $c^{+}<(n-1)k\cot(kr)-h(q,r)$. If furthermore the boundary
$\partial\Omega$
 is piecewise smooth, then we have
 \begin{eqnarray*}
\lambda_{1,p}(\Omega)\geq\left[\frac{(n-1)k\cot(kr)-h(q,r)}{p}\right]^{p}.
\end{eqnarray*}
 $\\$(2) If $\lim\limits_{\ell\rightarrow\infty}\kappa(q,\ell)=\infty$, let
 \begin{eqnarray*}
 r(s):=\min\left\{\frac{\pi}{2\sqrt{\kappa(q,s)}},\cot^{-1}\left[\frac{h(q,s)}{(n-1)\sqrt{\kappa(q,s)}}\right]\Big{/}\sqrt{\kappa(q,s)}\right\},\qquad
 s>0.
 \end{eqnarray*}
Choose $r=\max\limits_{s>0}r(s)$. We have
\begin{eqnarray*}
\lambda_{1,\varphi}^{\ast}(\Omega)\geq\left[\frac{(n-1)\sqrt{\kappa(q,s)}\cot(\sqrt{\kappa(q,s)}r)-h(q,r)-c^{+}}{2}\right]^{2}
\end{eqnarray*}
provided
$c^{+}<(n-1)\sqrt{\kappa(q,s)}\cot(\sqrt{\kappa(q,s)}r)-h(q,r)$. If
furthermore the boundary $\partial\Omega$
 is piecewise smooth, then we have
 \begin{eqnarray*}
\lambda_{1,p}(\Omega)\geq\left[\frac{(n-1)\sqrt{\kappa(q,s)}\cot(\sqrt{\kappa(q,s)}r)-h(q,r)}{p}\right]^{p}.
\end{eqnarray*}
$\\$(3) If $\kappa(q,\mathrm{inj}(q))=0$, choose
$r<\min\left\{\mathrm{inj}(q),\frac{n}{h(q,\mathrm{inj}(q))}\right\}$.
Assume that $\frac{n}{h(q,\mathrm{inj}(q))}=\infty$ if
$h(q,\mathrm{inj}(q))=0$. Then we have
\begin{eqnarray*}
\lambda_{1,\varphi}^{\ast}(\Omega)\geq\left[\frac{\frac{n}{r}-h(q,r)-c^{+}}{2}\right]^{2}
\end{eqnarray*}
provided $c^{+}<\frac{n}{r}-h(q,r)$. If furthermore $\Omega$ is
bounded and its boundary $\partial\Omega$
 is piecewise smooth, then we have
\begin{eqnarray*}
\lambda_{1,p}(\Omega)\geq\left[\frac{\frac{n}{r}-h(q,r)}{p}\right]^{p}.
\end{eqnarray*}
$\\$(4) If $\kappa(q,\mathrm{inj}(q))=-k^{2}<\infty$, $k>0$, and
$h(q,\mathrm{inj}(q))<(n-1)k$, choose $r<\mathrm{inj}(q)$. Then
\begin{eqnarray*}
\lambda_{1,\varphi}^{\ast}(\Omega)\geq\left[\frac{(n-1)k-h(q,r)-c^{+}}{2}\right]^{2}
\end{eqnarray*}
provided $c^{+}<(n-1)k-h(q,r)$. If furthermore $\Omega$ is bounded
and its boundary $\partial\Omega$
 is piecewise smooth, then we have
 \begin{eqnarray*}
\lambda_{1,p}(\Omega)\geq\left[\frac{(n-1)k-h(q,r)}{p}\right]^{p}.
 \end{eqnarray*}
 $\\$(5) If If $\kappa(q,\mathrm{inj}(q))=-k^{2}<\infty$, $k>0$, and
$h(q,\mathrm{inj}(q))\geq(n-1)k$, choose
\begin{eqnarray*}
r<\min\left\{\mathrm{inj}(q),\coth^{-1}\left[\frac{h(q,\mathrm{inj}(q))}{(n-1)k}\right]\Big{/}k\right\}.
\end{eqnarray*}
Then we have
\begin{eqnarray*}
\lambda_{1,\varphi}^{\ast}(\Omega)\geq\left[\frac{(n-1)k\coth(kr)-h(q,r)-c^{+}}{2}\right]^{2}
\end{eqnarray*}
provided $c^{+}<(n-1)k\coth(kr)-h(q,r)$. If furthermore the boundary
$\partial\Omega$
 is piecewise smooth, then we have
 \begin{eqnarray*}
\lambda_{1,p}(\Omega)\geq\left[\frac{(n-1)k\coth(kr)-h(q,r)}{p}\right]^{p}.
\end{eqnarray*}
In (2), since $r(s)>0$ for small $s$, $r>0$. In (3)-(5), because of
the non-positivity assumption on $\kappa(q,\mathrm{inj}(q))$, the
radius $r$ is not necessary to be finite, which implies that the
connected component $\Omega$ of $\phi^{-1}(\overline{B_{Q}(q,r)})$
may be unbounded as $r\rightarrow\infty$. Besides, in (4), one can
have a slight better estimate as follows
\begin{eqnarray*}
\lambda_{1,\varphi}^{\ast}(\Omega)\geq\left[\frac{(n-1)k+\frac{1}{r}-h(q,r)-c^{+}}{2}\right]^{2}
\end{eqnarray*}
provided $c^{+}<(n-1)k+\frac{1}{r}-h(q,r)$, by choosing
$X=\mathrm{grad}(\rho^{2}\circ\phi)$ in the proof below.
\end{theorem}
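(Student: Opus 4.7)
The overall strategy is to invoke Lemma \ref{lemma2-3} on the domain $\Omega$ with carefully chosen test vector fields; the five cases differ only in the form of the pointwise divergence bound available. Set $\rho(\cdot)=\mathrm{dist}_Q(q,\cdot)$ and $f=\rho\circ\phi$. Because $q\notin\phi(M)$ and every case enforces $r<\mathrm{inj}(q)$, $\rho$ is smooth on a neighborhood of $\phi(\Omega)$, $\mathrm{grad}\,\rho$ is the unit radial field, and hence $f$ is smooth on $\Omega$ with $\|\mathrm{grad}\,f\|\le 1$. For cases (1), (2), (4), (5) I take $X=\mathrm{grad}\,f$, so $\|X\|_{\infty}\le 1$; for case (3) and the refined bound in (4) the correct choice is $X=\mathrm{grad}(\rho^{2}\circ\phi)$, which satisfies $\|X\|_{\infty}\le 2r$. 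In either case $X\in\mathcal{X}(\Omega)$ once the divergence positivity is checked.

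The central step is a uniform lower bound for $\mathrm{div}\,X$ on $\Omega$. Formula (\ref{3-2}) applied with $g=\rho$ (respectively $g=\rho^{2}$) yields
\[
\Delta_M(g\circ\phi)=\sum_{i=1}^{n}\mathrm{Hess}_{Q}\,g(e_{i},e_{i})+\Big\langle\mathrm{grad}_{Q}\,g,\,\sum_{i=1}^{n}\alpha(e_{i},e_{i})\Big\rangle.
\]
Decomposing each $e_{i}\in T_{\phi(q)}Q$ into a radial component along $\gamma'$ and a perpendicular part, and using $\mathrm{Hess}\,\rho(\gamma',\cdot)=0$, the Hessian Comparison Theorem gives $\sum_{i}\mathrm{Hess}\,\rho(e_{i},e_{i})\ge (n-1)\mu_{1}(\rho)$ (via $\sum_i\|e_i^\perp\|^2=n-|(\gamma')^T|^2\ge n-1$), while (\ref{HCTLI}) furnishes $\sum_{i}\mathrm{Hess}\,\rho^{2}(e_{i},e_{i})\ge 2(n-1)\rho\mu_{1}(\rho)+2$. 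Here $\mu_{1}$ is the comparison function from Theorem \ref{HCT} determined by the upper sectional curvature bound on $Q$ in the case under consideration. The locally bounded mean curvature hypothesis controls the coupling term by $-h(q,r)$ (resp.\ $-2\rho h(q,r)$) via Cauchy--Schwarz.

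Substituting the appropriate $\mu_{1}$ and using the monotonicity of $\cot$ and $\coth$ to replace $\rho$ by its uniform upper bound $r$ on $\Omega$ produces exactly the numerators in the displayed eigenvalue bounds. The restrictions on $r$ are designed precisely so that (i) $\rho$ remains smooth ($r<\mathrm{inj}(q)$), (ii) the trigonometric formula $\mu_{1}=k\cot(k\rho)$ is valid and positive ($r<\pi/(2k)$ in (1) and (2)), and (iii) the resulting divergence bound is strictly positive, so Lemma \ref{lemma2-3} applies and delivers both eigenvalue inequalities. In case (2) the auxiliary parameter $s$ simply encodes the choice of scale at which the upper curvature $\kappa(q,s)$ is evaluated, and the radius $r=\max_{s}r(s)$ selects the largest ball on which this mechanism still yields a positive divergence. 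The improvement stated at the end of (4) is obtained by using $X=\mathrm{grad}(\rho^{2}\circ\phi)$: the additional ``$+1/r$'' is precisely the contribution of the constant ``$+2$'' in (\ref{HCTLI}) after division by $\|X\|_{\infty}\le 2r$.

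The main obstacles are technical rather than conceptual. In cases (3)--(5) the radius $r$ may be infinite so $\Omega$ need not be relatively compact; one then exhausts $\Omega$ by the sub-level sets $\{f\le r'\}$ with $r'\nearrow r$, applies Lemma \ref{lemma2-3} on each (using the same global $X$ restricted), and passes to the limit via the domain monotonicity of $\lambda_{1,\varphi}^{\ast}$ (the $p$-Laplacian estimate requires $\Omega$ to be bounded, as already noted in the statement). The field $X$ also fails to be $C^{\infty}$ where $\phi(\Omega)$ meets $\partial B_{Q}(q,r)$, but this is precisely the ``smooth almost everywhere'' setting permitted by Remark \ref{remark2-4}(1), so that the divergence theorem used in the proof of Lemma \ref{lemma2-3} remains valid. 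Once the Hessian comparison and the mean-curvature hypothesis have been combined to produce the pointwise divergence inequality, no further serious difficulty arises.
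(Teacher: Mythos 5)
Your proposal is correct and follows essentially the same route as the paper's own proof: choosing $X=\mathrm{grad}(\rho\circ\phi)$ (or $\mathrm{grad}(\rho^{2}\circ\phi)$), bounding $\mathrm{div}\,X$ from below via formula (\ref{3-2}), the Hessian Comparison Theorem and the locally bounded mean curvature hypothesis, and then invoking Lemma \ref{lemma2-3}. Your treatment of the tangential decomposition of the $e_i$ and of the unbounded/a.e.-smooth technicalities is in fact somewhat more careful than the paper's, but it is the same argument.
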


\begin{proof}
Similar to the proof of \cite[Theorem 4.3]{bm}. Define two functions
as follows
\begin{eqnarray*}
f_{i}=\rho^{i}\circ\phi:M\rightarrow\mathbb{R}, \qquad i=1,2,
\end{eqnarray*}
where $\rho(x)={\rm{dist}}_{Q}(q,x)$ is the distance function on
$Q$. Clearly, $f_{1}$, $f_{2}$ are smooth functions on
$\phi^{-1}\left(B_{Q}(q,\mathrm{inj}(q))\right)$. Let $\Omega$ be a
connected component of
$\phi^{-1}(\overline{B_{Q}(q,r)})\subseteq\phi^{-1}\left(B_{Q}(q,\mathrm{inj}(q))\right)$,
and let $X_{i}=\mathrm{grad}f_{i}$, $i=1,2$, on $\Omega$. By
(\ref{3-2}), we have
\begin{eqnarray*}
\mathrm{div}X_{i}(x)=\Delta
f_{i}(x)=\sum\limits_{j=1}^{n-1}\mathrm{Hess}\rho^{i}(\phi(x))(e_{j},e_{j})+\langle\mathrm{grad}\rho^{i},H\rangle_{\phi(x)},
\end{eqnarray*}
with $\{e_{1},e_{2},\ldots,e_{n}\}$ an orthonormal basis of
$T_{x}M$, where $e_{n}=\mathrm{grad}\rho(x)$. Applying Theorem
\ref{HCT} directly, one can obtain
\begin{itemize}
\item if $\kappa(q,\mathrm{inj}(q))=k^{2}<\infty$, $k>0$, then
$\mathrm{div}X_{1}\geq (n-1)k\cot(kr)-h(q,r)>0$;

\item if $\kappa(q,\mathrm{inj}(q))=0$, then
$\mathrm{div}X_{2}\geq2n-2rh(q,r)>0$;

\item if $\kappa(q,\mathrm{inj}(q))=-k^{2}<\infty$, $k>0$, then
$\mathrm{div}X_{1}\geq (n-1)k\coth(kr)-h(q,r)>0$.
\end{itemize}
Together with the fact that $\|X_{1}\|=1$ and $\|X_{2}\|=2r$,
estimates in Theorem \ref{theorem3-2} can be obtained by applying
Lemma \ref{lemma2-3} directly.
\end{proof}

\begin{remark}
\rm{Clearly, when $\varphi=const.$ (or $p=2$, $\Omega$ is bounded),
our estimates here are exactly those in \cite[Theorem 4.3]{bm}.}
\end{remark}

 Applying directly Theorem \ref{theorem3-2}, we can obtain

 \begin{corollary}
 Let $\phi:M\rightarrow\mathbb{R}^{m}$  be an isometric
 minimal immersion of an $n$-dimensional ($n\geq2$) complete
 submanifold. Assume that $\phi(M)\subset B_{\mathbb{R}^{m}}(o,r)$, then
 $\lambda_{1,p}(M)\geq\left(\frac{n}{pr}\right)^{p}$.
 \end{corollary}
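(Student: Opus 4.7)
The plan is to use the vector-field technique from the proof of Theorem \ref{theorem3-2} (flat ambient case) together with the geodesic-ball exhaustion that defines $\lambda_{1,p}(M)$.

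Fix a basepoint $p \in M$ and set $\Omega_\ell := B_{M}(p, \ell)$ for each $\ell > 0$; completeness of $M$ and Hopf--Rinow ensure that each $\overline{\Omega_\ell}$ is compact. On $M$ I would introduce the vector field $X := \mathrm{grad}(\rho^{2} \circ \phi)$, where $\rho(y) := \|y - o\|$ is the Euclidean distance to the origin. Since $\rho^{2}$ is smooth on all of $\mathbb{R}^{m}$, $X$ is globally smooth on $M$ regardless of whether $o \in \phi(M)$.

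Formula (\ref{3-2}) applied with $g = \rho^{2}$, combined with $\mathrm{Hess}_{\mathbb{R}^{m}}\rho^{2} = 2\,g_{\mathbb{R}^{m}}$ and the minimality $\sum_{i}\alpha(e_{i}, e_{i}) = 0$, gives $\mathrm{div}\, X \equiv 2n$ on $M$. The hypothesis $\phi(M) \subset B_{\mathbb{R}^{m}}(o, r)$ bounds $\|X\| \leq 2\rho \circ \phi < 2r$ pointwise, since the intrinsic gradient $X$ is the tangential projection of the ambient gradient $2(\phi - o)$, whose length is $2\rho \circ \phi$. Restricted to $\Omega_\ell$, the vector field $X$ therefore lies in $\mathcal{X}(\Omega_\ell)$ with
\begin{equation*}
c(\Omega_\ell) \;\geq\; \frac{\inf_{\Omega_\ell}\mathrm{div}\,X}{\sup_{\Omega_\ell}\|X\|} \;\geq\; \frac{2n}{2r} \;=\; \frac{n}{r}.
\end{equation*}

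Lemma \ref{lemma2-3} then delivers $\lambda_{1,p}(\Omega_\ell) \geq (n/(pr))^{p}$ uniformly in $\ell$, and passing to the limit in $\lambda_{1,p}(M) = \lim_{\ell \to \infty}\lambda_{1,p}(\Omega_\ell)$ preserves the bound. No serious obstacle arises; the only conceptual subtlety is that the natural centre $o$ may lie in $\phi(M)$, which would formally violate the $q \notin \phi(M)$ hypothesis of Theorem \ref{theorem3-2} — but this is rendered harmless by working with the smooth function $\rho^{2}$ rather than $\rho$, so that $X$ extends smoothly through $o$ and the divergence computation remains valid everywhere on $M$.
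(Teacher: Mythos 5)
Your proposal is correct and follows essentially the same route as the paper, which obtains this corollary by applying case (3) of Theorem \ref{theorem3-2} (flat ambient curvature) with the vector field $X=\mathrm{grad}(\rho^{2}\circ\phi)$, the identity $\mathrm{div}X=2n$ from minimality and $\mathrm{Hess}\,\rho^{2}=2g_{\mathbb{R}^{m}}$, the bound $\|X\|\leq 2r$, Lemma \ref{lemma2-3}, and the geodesic-ball exhaustion defining $\lambda_{1,p}(M)$. Your observation that working with the smooth function $\rho^{2}$ renders the hypothesis $o\notin\phi(M)$ unnecessary is a correct and welcome clarification of a point the paper leaves implicit.
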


Using a similar proof to that of \cite[Corollary 4.4]{bm} and
applying directly Theorem \ref{theorem3-2}, \cite[Proposition
10.1]{ag}, \cite[Theorem A.3]{sy},
 we can get the following.

\begin{corollary} \label{corollary3-4}
Let $\phi:M\rightarrow Q$  be an isometric immersion with bounded
mean curvature $\|H\|\leq\alpha<(n-1)a$, where $M$ is an
$n$-dimensional complete noncompact Riemannian manifold and $Q$ is
an $m$-dimensional complete simply connected Riemannian manifold
with sectional curvature $K_{Q}$ satisfying $K_{Q}\leq-a^{2}<0$ for
some constant $a>0$. Assume that $\varphi$ is a real-valued smooth
function on $M$ with $\|\mathrm{grad}\varphi\|\leq c^{+}$, where
$c^{+}$ is the supremum of the norm of the gradient of $\varphi$.
Then we have the following estimates
\begin{eqnarray*}
\lambda_{1,\varphi}(M)\geq\left[\frac{(n-1)a-\alpha-c^{+}}{2}\right]^{2}>0
\qquad (provided ~c^{+}<(n-1)a-\alpha)
\end{eqnarray*}
and
\begin{eqnarray*}
\lambda_{1,p}(M)\geq\left[\frac{(n-1)a-\alpha}{p}\right]^{p}>0.
\end{eqnarray*}
In particular, there exist entire Green's functions on $M$. If
furthermore $M$ is minimal, then $M$ is non-parabolic.
\end{corollary}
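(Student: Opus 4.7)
The plan is to reduce the claimed global bounds to Theorem \ref{theorem3-2}(4) applied along an exhaustion of $M$, and then let the radius tend to infinity. Since $Q$ is complete, simply connected, and has $K_{Q}\leq -a^{2}<0$, it is a Hadamard manifold, so for every $q\in Q$ the injectivity radius $\mathrm{inj}(q)=+\infty$ and $\kappa(q,\mathrm{inj}(q))=\sup_{Q}K_{Q}=:-k^{2}$ with $k\geq a$. The global curvature bound $\|H\|\leq\alpha$ yields $h(q,r)\leq\alpha<(n-1)a\leq(n-1)k$ for every $r>0$, so the hypotheses of Theorem \ref{theorem3-2}(4) are in force for arbitrarily large $r$. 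The degenerate case $n=m$ is handled separately: $\phi$ is then a local isometry of a complete manifold onto a simply connected complete manifold and hence a diffeomorphism, so $M=Q$ is Hadamard with $H\equiv 0$, and Lemma \ref{lemma2-8} already delivers both inequalities.

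Assume therefore $n<m$, so that $\phi(M)$ has zero $m$-dimensional measure and we may select $q\in Q\setminus\phi(M)$. Fix $x_{0}\in M$. For every $\ell>0$, distance-nonincreasingness of the isometric immersion $\phi$ gives $\phi(B_{M}(x_{0},\ell))\subseteq B_{Q}(\phi(x_{0}),\ell)\subseteq B_{Q}(q,r_{\ell})$ whenever $r_{\ell}>\ell+d_{Q}(q,\phi(x_{0}))$; by Sard's theorem applied to $\rho_{q}\circ\phi$, we may arrange $r_{\ell}$ to be a regular value so that the connected component $\Omega_{\ell}$ of $\phi^{-1}(\overline{B_{Q}(q,r_{\ell})})$ containing $B_{M}(x_{0},\ell)$ has piecewise smooth boundary. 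Applying Theorem \ref{theorem3-2}(4) to $\Omega_{\ell}$, and using $k\geq a$, $h(q,r_{\ell})\leq\alpha$ (which also guarantees the admissibility condition $c^{+}<(n-1)k-h(q,r_{\ell})$ from the assumed $c^{+}<(n-1)a-\alpha$), we get
\begin{eqnarray*}
\lambda_{1,\varphi}^{\ast}(\Omega_{\ell})\geq\left[\frac{(n-1)a-\alpha-c^{+}}{2}\right]^{2},\qquad \lambda_{1,p}(\Omega_{\ell})\geq\left[\frac{(n-1)a-\alpha}{p}\right]^{p}.
\end{eqnarray*}
Domain monotonicity for the first Dirichlet eigenvalue of $\Delta_{\varphi}$ and $\Delta_{p}$ (\cite[Lemma 1.5]{dm2}, \cite[Lemma 1.1]{dm2}), applied to the inclusion $B_{M}(x_{0},\ell)\subseteq\Omega_{\ell}$, transfers these lower bounds to $B_{M}(x_{0},\ell)$. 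Letting $\ell\to\infty$ in the definitions of $\lambda_{1,\varphi}(M)$ and $\lambda_{1,p}(M)$ yields the two asserted inequalities.

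For the final assertions, specializing to $p=2$ and $\varphi\equiv 0$ produces $\lambda_{1}(M)>0$; the existence of an entire Green's function on $M$ then follows from \cite[Theorem A.3]{sy}, and non-parabolicity under the additional minimality hypothesis follows from \cite[Proposition 10.1]{ag}. The step I expect to be most delicate is the verification that Theorem \ref{theorem3-2}(4) may be invoked on $\Omega_{\ell}$, since the preimage of a compact set under an isometric immersion need not have compact closure, so the component $\Omega_{\ell}$ could a priori be unbounded. The cleanest way around this is to sidestep Theorem \ref{theorem3-2} entirely and apply Lemma \ref{lemma2-3} directly to $B_{M}(x_{0},\ell)$ itself (which is bounded by Hopf-Rinow) using the vector field $X=\mathrm{grad}(\rho_{q}\circ\phi)$: the formula for $\Delta(\rho_{q}\circ\phi)$ together with Hessian Comparison gives $\mathrm{div}\,X\geq(n-1)a\coth(a\,\rho_{q}\circ\phi)-\|H\|\geq(n-1)a-\alpha>0$ and $\|X\|\leq\|\mathrm{grad}\rho_{q}\|=1$ pointwise on $M$, so $c(B_{M}(x_{0},\ell))\geq(n-1)a-\alpha$, which feeds into Lemma \ref{lemma2-3} to produce exactly the bounds above without any boundedness issue on components.
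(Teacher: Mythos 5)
Your proposal is correct and follows essentially the same route as the paper, whose proof consists of invoking Theorem \ref{theorem3-2} together with \cite[Theorem A.3]{sy} and \cite[Proposition 10.1]{ag} along the lines of \cite[Corollary 4.4]{bm}. Your observation that a connected component $\Omega_{\ell}$ of $\phi^{-1}(\overline{B_{Q}(q,r_{\ell})})$ may fail to be bounded, and your fix of applying Lemma \ref{lemma2-3} directly to the intrinsic geodesic balls $B_{M}(x_{0},\ell)$ with $X=\mathrm{grad}(\rho_{q}\circ\phi)$, merely make explicit a detail the paper leaves implicit.
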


\begin{remark}
\rm{ Corollary \ref{corollary3-4} gives a positive answer to
Questions 1 and 2 proposed in Section \ref{section1}, i.e., finding
conditions such that $\lambda_{1,\varphi}(M)>0$,
$\lambda_{1,p}(M)>0$ for a complete noncompact manifold $M$, and
also shows interesting geometric conclusions, i.e., the existence of
Green's functions and the non-parabolic property. Besides, if
$Q=\mathbb{H}^{m}(-1)$ which implies $a=1$, then our lower bounds
here are exactly those in \cite[Theorems 1.3 and 1.7]{dm2}. }
\end{remark}

\section{Eigenvalue estimates for submanifolds with locally bounded mean curvature in product manifolds $N\times\mathbb{R}$}
\renewcommand{\thesection}{\arabic{section}}
\renewcommand{\theequation}{\thesection.\arabic{equation}}
\setcounter{equation}{0} \setcounter{maintheorem}{0}

Let $\phi:M\rightarrow N\times\mathbb{R}$ be an isometric immersion
from an $n$-dimensional complete Riemannian manifold to the product
space $N\times\mathbb{R}$ with $N$ an $m$-dimensional complete
Riemannian manifold. Since $\phi$ is an isometric immersion, we have
formulas $(\ref{3-1})$, $(\ref{3-2})$ with $Q=N\times\mathbb{R}$.
Besides, for convenience, \emph{we can use ${\rm{grad}}(\cdot)$ to
denote
 the gradient of a given function on $M$ or its isometric image $\phi(M)\subseteq
 N\times\mathbb{R}$.} In this section, we would like to estimate from below the first
fundamental tone $\lambda_{1,\varphi}^{\ast}(\Omega)$ of $\Omega$
(with $\Omega\subseteq  M$) and the first eigenvalue
$\lambda_{1,p}(\Omega)$ of the $p$-Laplacian on $\Omega$ (with
$\Omega\subset M$ a domain with compact closure and piecewise smooth
boundary). However, before that, we need the following notion, which
is stronger than the one in Definition \ref{lbmc}.

\begin{definition} \label{lbmc-2} (\cite{bc})
An isometric immersion $\phi:M\rightarrow N\times\mathbb{R}$ has
locally bounded mean curvature $H$ if for any $q\in N$ and $r>0,$
the number $h(q,r):=\sup\{\|H(x)\|;
x\in\phi(M)\cap\left(B_{N}(q,r)\times\mathbb{R}\right)\}$ is finite,
where, similar as before, $B_{N}(q,r)$ denotes the geodesic ball,
with center $q$ and radius $r$, on $N$.
\end{definition}

We also need the following conclusion, which is an extension of
\cite[Theorem 1.7]{bm2}.
\begin{lemma} \label{lemma4-2}
Let $\mathcal{W}^{1,1}(M)$ be the Sobolev space of all vector fields
$X\in L^{1}_{loc}(M)$ possessing weak divergence \footnote{For a
Riemannian manifold $M$, a function $g\in L^{1}_{loc}(M)$ is a weak
divergence of $X$ if
$\int_{M}g\psi=-\int_{M}\langle\mathrm{grad}\psi,X\rangle$, $\forall
\psi\in C^{\infty}_{0}(M)$. There exists at most one $g\in
L^{1}_{loc}(M)$ for a given vector field $X\in L^{1}_{loc}(M)$ and
we can write $g=\mathrm{div}X$. Clearly, for a $C^{1}$ vector field
$X$, its classical divergence coincides with the weak divergence
$\mathrm{div}X$. } $\mathrm{div}X$ on a Riemannian manifold $M$.
Assume that $\varphi$ is a real-valued smooth function on $M$ with
$\|\mathrm{grad}\varphi\|\leq c^{+}$, where $c^{+}$ is the supremum
of the norm of the gradient of $\varphi$. Then the weighted
fundamental tone $\lambda_{1,\varphi}^{\ast}(M)$ of $M$ satisfies
\begin{eqnarray} \label{add-o-1}
\lambda_{1,\varphi}^{\ast}(M)\geq\sup\limits_{\mathcal{W}^{1,1}(M)}\left\{\inf\limits_{M}\left(\mathrm{div}X-\|X\|^{2}-c^{+}\|X\|\right)\right\}.
\end{eqnarray}
If furthermore $M$ is complete, then the first eigenvalue
$\lambda_{1,p}(M)$ of the $p$-Laplacian satisfies
\begin{eqnarray} \label{add-o-2}
\lambda_{1,p}(M)\geq\sup\limits_{\mathcal{W}^{1,1}(M)}\left\{\inf\limits_{M}\left[\mathrm{div}X-(p-1)\|X\|^{\frac{p}{p-1}}\right]\right\}.
\end{eqnarray}
\end{lemma}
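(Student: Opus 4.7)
The plan is to lift the argument used in Lemma \ref{lemma2-3} from a bounded domain $\Omega$ to the whole (possibly noncompact) manifold $M$. The essential point is that the boundary terms which were killed there by compactness of $\overline{\Omega}$ and vanishing of $f$ on $\partial\Omega$ are here killed by the combination of (i) compact support of any test function $f\in C_{0}^{\infty}(M)$ and (ii) the defining property of the weak divergence.

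For the first inequality, I would fix $X\in\mathcal{W}^{1,1}(M)$ and $f\in C_{0}^{\infty}(M)$ and use $f^{2}e^{-\varphi}\in C_{0}^{\infty}(M)$ as a test function in the definition of $\mathrm{div}X$ to obtain
\begin{eqnarray*}
\int_{M}f^{2}e^{-\varphi}\,\mathrm{div}X
=-\int_{M}\langle\mathrm{grad}(f^{2}e^{-\varphi}),X\rangle
=-2\!\int_{M}\! f\langle\mathrm{grad}f,X\rangle e^{-\varphi}+\!\int_{M}\! f^{2}\langle\mathrm{grad}\varphi,X\rangle e^{-\varphi}.
\end{eqnarray*}
Then I would bound the first right-hand term by Cauchy--Schwarz followed by the AM--GM inequality $2|f|\,\|\mathrm{grad}f\|\,\|X\|\leq\|\mathrm{grad}f\|^{2}+f^{2}\|X\|^{2}$, and the second one by $\|\mathrm{grad}\varphi\|\leq c^{+}$. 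Collecting terms gives
\begin{eqnarray*}
\int_{M}f^{2}\bigl(\mathrm{div}X-\|X\|^{2}-c^{+}\|X\|\bigr)e^{-\varphi}\leq\int_{M}\|\mathrm{grad}f\|^{2}e^{-\varphi}.
\end{eqnarray*}
Dividing by $\int_{M}f^{2}e^{-\varphi}$, taking the infimum over $f\in C_{0}^{\infty}(M)\setminus\{0\}$, recalling the variational characterization of $\lambda_{1,\varphi}^{\ast}(M)$, and finally taking the supremum over $X\in\mathcal{W}^{1,1}(M)$ yields \eqref{add-o-1}.

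For the $p$-Laplacian inequality, the strategy is analogous but with Young's inequality in the form $ab\leq a^{p}/p+(p-1)b^{p/(p-1)}/p$ applied to $a=\|\mathrm{grad}f\|$ and $b=|f|^{p-1}\|X\|$. Specifically, I would use $|f|^{p}$ as (an almost) test function for the weak divergence of $X$ to get
\begin{eqnarray*}
\int_{M}|f|^{p}\,\mathrm{div}X=-p\int_{M}|f|^{p-2}f\langle\mathrm{grad}f,X\rangle,
\end{eqnarray*}
bound the right-hand side by $p\int_{M}|f|^{p-1}\|\mathrm{grad}f\|\,\|X\|$, apply Young's inequality, and rearrange to obtain
\begin{eqnarray*}
\int_{M}|f|^{p}\bigl(\mathrm{div}X-(p-1)\|X\|^{p/(p-1)}\bigr)\leq\int_{M}\|\mathrm{grad}f\|^{p},
\end{eqnarray*}
from which \eqref{add-o-2} follows exactly as in the weighted case.

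The main technical obstacle is the use of $|f|^{p}$ (and, to a lesser extent, the manipulation with weak divergences) as a test function when $p$ is not an even integer, because $|f|^{p}$ is only of class $C^{1,\alpha}$ rather than $C^{\infty}_{0}$, and Definition (weak divergence in the footnote) only refers to $C_{0}^{\infty}$ test functions. I would handle this by the standard approximation $\psi_{\varepsilon}:=(f^{2}+\varepsilon)^{p/2}-\varepsilon^{p/2}\in C_{0}^{\infty}(M)$, writing the weak-divergence identity for $\psi_{\varepsilon}$, noting that $\mathrm{grad}\psi_{\varepsilon}=p(f^{2}+\varepsilon)^{p/2-1}f\,\mathrm{grad}f$ converges a.e.\ and is dominated on $\mathrm{supp}(f)$, and then letting $\varepsilon\downarrow 0$ using the dominated convergence theorem. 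Completeness of $M$ is invoked only at the very end to guarantee that $\lambda_{1,p}(M)$ coincides with the variational infimum over $W^{1,p}_{0}(M)$, as needed for the conclusion.
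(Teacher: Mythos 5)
Your proposal is correct and follows essentially the same route as the paper: integrate the identity $\int_M\mathrm{div}(f^2Xe^{-\varphi})=0$ (resp. $\int_M\mathrm{div}(|f|^pX)=0$), which is exactly the weak-divergence definition tested against $f^2e^{-\varphi}$ (resp. $|f|^p$), then apply Cauchy--Schwarz with AM--GM (resp. Young's inequality), rearrange into a lower bound on the Rayleigh quotient, and take the supremum over $X\in\mathcal{W}^{1,1}(M)$. Your additional $\varepsilon$-regularization of $|f|^p$ for non-even $p$ is a welcome technical refinement that the paper's proof silently omits.
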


\begin{proof}
Let $X\in L^{1}_{loc}(M)$ and $f\in C^{\infty}_{0}(M)$. Clearly, we
have $\int_{M}\mathrm{div}(f^{2}Xe^{-\varphi})=0$ and
$\int_{M}\mathrm{div}(|f|^{p}X)=0$. By a direct computation, it
follows that
\begin{eqnarray*}
0=\int_{M}\mathrm{div}(f^{2}Xe^{-\varphi})&=&\int_{M}f^{2}\mathrm{div}X\cdot
e^{-\varphi}+\int_{M}\left\langle\mathrm{grad}f^{2},X\right\rangle
e^{-\varphi}-\int_{M}f^{2}\left\langle\mathrm{grad}\varphi,X\right\rangle
e^{-\varphi}\\
&\geq&\int_{M}f^{2}\mathrm{div}X\cdot
e^{-\varphi}-2\int_{M}|f|\cdot\|X\|\cdot\|\mathrm{grad}f\|e^{-\varphi}-c^{+}\int_{M}\|X\|f^{2}e^{-\varphi}\\
&\geq&\int_{M}f^{2}\mathrm{div}X\cdot
e^{-\varphi}-\int_{M}\left[f^{2}\cdot\|X\|^{2}+\|\mathrm{grad}f\|^{2}\right]e^{-\varphi}-c^{+}\int_{M}\|X\|f^{2}e^{-\varphi}\\
&=&\int_{M}\left(\mathrm{div}X-\|X\|^{2}-c^{+}\|X\|\right)f^{2}e^{-\varphi}-\int_{M}\|\mathrm{grad}f\|^{2}e^{-\varphi}\\
&\geq&\inf\limits_{M}\left(\mathrm{div}X-\|X\|^{2}-c^{+}\|X\|\right)\int_{M}f^{2}e^{-\varphi}-\int_{M}\|\mathrm{grad}f\|^{2}e^{-\varphi},
\end{eqnarray*}
which implies
\begin{eqnarray*}
\frac{\int_{M}\|\mathrm{grad}f\|^{2}e^{-\varphi}}{\int_{M}f^{2}e^{-\varphi}}\geq\inf\limits_{M}\left(\mathrm{div}X-\|X\|^{2}-c^{+}\|X\|\right)
\end{eqnarray*}
Then, by taking supremum to both sides of the above inequality over
$\mathcal{W}^{1,1}(M)$, we have
\begin{eqnarray*}
\frac{\int_{M}\|\mathrm{grad}f\|^{2}e^{-\varphi}}{\int_{M}f^{2}e^{-\varphi}}\geq\sup\limits_{\mathcal{W}^{1,1}(M)}\left\{\inf\limits_{M}\left(\mathrm{div}X-\|X\|^{2}-c^{+}\|X\|\right)\right\},
\end{eqnarray*}
which implies (\ref{add-o-1}). On the other hand, since
$\int_{M}\mathrm{div}(|f|^{p}X)=0$, by a direct calculation, one can
obtain
\begin{eqnarray*}
0=\int_{M}\mathrm{div}(|f|^{p}X)&=&\int_{M}\left\langle\mathrm{grad}\left(|f|^{p}\right),X\right\rangle+\int_{M}|f|^{p}\mathrm{div}X\\
&\geq&-\int_{M}p|f|^{p-1}\|\mathrm{grad}f\|\cdot\|X\|+\int_{M}|f|^{p}\mathrm{div}X\\
&\geq&-\int_{M}p\left[\frac{\left(|f|^{p-1}\|X\|\right)^{\frac{p}{p-1}}}{\frac{p}{p-1}}+\frac{\|\mathrm{grad}f\|^{p}}{p}\right]+\int_{M}|f|^{p}\mathrm{div}X\\
&=&\int_{M}\left[\mathrm{div}X-(p-1)\|X\|^{\frac{p}{p-1}}\right]|f|^{p}-\int_{M}\|\mathrm{grad}f\|^{p}\\
&\geq&\inf\limits_{M}\left[\mathrm{div}X-(p-1)\|X\|^{\frac{p}{p-1}}\right]\int_{M}|f|^{p}-\int_{M}\|\mathrm{grad}f\|^{p},
\end{eqnarray*}
where the second inequality holds by applying Young's inequality.
Therefore, we have
\begin{eqnarray*}
\frac{\int_{M}\|\mathrm{grad}f\|^{p}}{\int_{M}|f|^{p}}\geq\inf\limits_{M}\left[\mathrm{div}X-(p-1)\|X\|^{\frac{p}{p-1}}\right],
\end{eqnarray*}
and then, by taking supremum to both sides of the above inequality
over $\mathcal{W}^{1,1}(M)$, we have
\begin{eqnarray} \label{4-3}
\frac{\int_{M}\|\mathrm{grad}f\|^{p}}{\int_{M}|f|^{p}}\geq\sup\limits_{\mathcal{W}^{1,1}(M)}\left\{\inf\limits_{M}\left[\mathrm{div}X-(p-1)\|X\|^{\frac{p}{p-1}}\right]\right\}
\end{eqnarray}
which implies (\ref{add-o-2}). This completes the proof of Lemma
\ref{lemma4-2}.
\end{proof}

\begin{remark}
\rm{(1) Using an almost same method, we can get
\begin{eqnarray*}
\lambda_{1,\varphi}^{\ast}(M)\geq\sup\limits_{\mathcal{W}^{1,1}(M)}\left\{\inf\limits_{M\setminus{F}}\left(\mathrm{div}X-\|X\|^{2}-c^{+}\|X\|\right)\right\}
\end{eqnarray*}
and
\begin{eqnarray*}
\lambda_{1,p}(M)\geq\sup\limits_{\mathcal{W}^{1,1}(M)}\left\{\inf\limits_{M\setminus{F}}\left[\mathrm{div}X-(p-1)\|X\|^{\frac{p}{p-1}}\right]\right\},
\end{eqnarray*}
where $F$ has zero Riemannian volume.\\
(2) If $M$ is compact, then, by taking infimum to the LHS of
(\ref{4-3}) over the space $\{f|f\in W^{1,p}_{0}(\Omega), f\neq0\}$,
one can get (\ref{add-o-2}) directly. If $M$ is noncompact, one can
choose an exhaustion $\{\Omega_{i}\}_{i=1,2,3,\cdots}$ with
$\Omega_{i}\subset\Omega_{j}$, $i<j$, then as the compactness
situation, one can obtain
\begin{eqnarray*}
\lambda_{1,p}(\Omega_{i})\geq\sup\limits_{\mathcal{W}^{1,1}(\Omega_{i})}\left\{\inf\limits_{\Omega_{i}}\left[\mathrm{div}X-(p-1)\|X\|^{\frac{p}{p-1}}\right]\right\},
\end{eqnarray*}
which, by applying domain monotonicity of the first eigenvalue of
the $p$-Laplacian with vanishing Dirichlet data and taking limits to
both sides of the above inequality as $i\rightarrow\infty$, implies
(\ref{add-o-2}). }
\end{remark}

For clarifying argument below better, we need to define functions
$S_{k}(t)$ and $C_{k}(t)$ as follows.
\begin{eqnarray} \label{add-2}
S_{k}(t)=\left\{
\begin{array}{lll}
\sin(\sqrt{k}\cdot t)/\sqrt{k},~~~~~\qquad\mathrm{if}~~k>0,\\[2mm]
t,~~~~~~~\quad\qquad\qquad\qquad\mathrm{if}~~k=0,\\[2mm]
\sinh(\sqrt{-k}\cdot t)/\sqrt{-k},~~~~~\mathrm{if}~~k<0, &\quad
\end{array}
\right.
\end{eqnarray}
and
\begin{eqnarray} \label{add-3}
C_{k}(t)=S_{k}'(t).
\end{eqnarray}

We can prove the following.

\begin{theorem}  \label{theorem4-4}
Let $\phi:M\rightarrow N\times\mathbb{R}$ be an $n$-dimensional
($n\geq3$) complete minimal isometric immersed submanifold, where
the $m$-dimensional Riemannian manifold $N$ has radial sectional
curvature $K_{\gamma(t)}(\gamma'(t),\vec{v})\leq k$, $\vec{v}\in
T_{\gamma(t)}N$, $\|\vec{v}\|=1$,
$\vec{v}\bot\frac{\partial}{\partial{t}}$, along the minimizing
geodesic $\gamma(t)$ issuing from a point $q\in N$. Let $\Omega$ be
any connected component of
$\phi^{-1}\left(B_{N}(q,r)\times\mathbb{R}\right)\}$, where
$r<\min\left\{\mathrm{inj}_{N}(q),\frac{\pi}{2\sqrt{k}}\right\}$
($\pi/2\sqrt{k}=\infty$ if $k\leq0$), and $\mathrm{inj}_{N}(q)$
denotes the injectivity radius of $N$ at the point $q$. Assume that
$\varphi$ is a real-valued smooth function on $M$ with
$\|\mathrm{grad}\varphi\|\leq c^{+}$, where $c^{+}$ is the supremum
of the norm of the gradient of $\varphi$. Suppose in addition that
\begin{itemize}
\item if $|h(q,r)|<\digamma^{2}<\infty$, then
$r\leq(\frac{C_{k}}{S_{k}})^{-1}\cdot\frac{\digamma^{2}}{(n-2)}$ or

\item if $\lim\limits_{r\rightarrow\infty}h(q,r_{0})=\infty$, then
$r\leq(\frac{C_{k}}{S_{k}})^{-1}\cdot\frac{h(q,r_{0})}{(n-2)}$,
where $r_{0}$ is chosen such that
$(n-2)\frac{C_{k}(r_{0})}{S_{k}(r_{0})}-h(q,r_{0})=0$.
\end{itemize}
Then we have
\begin{eqnarray*}
\lambda_{1,\varphi}^{\ast}(\Omega)\geq\left[\frac{(n-2)\frac{C_{k}(r)}{S_{k}(r)}-h(q,r)-c^{+}}{2}\right]^{2}
\end{eqnarray*}
provided $c^{+}<(n-2)\frac{C_{k}(r)}{S_{k}(r)}-h(q,r)$, and
\begin{eqnarray*}
\lambda_{1,p}(\Omega)\geq\left[\frac{(n-2)\frac{C_{k}(r)}{S_{k}(r)}-h(q,r)}{p}\right]^{p}.
\end{eqnarray*}
\end{theorem}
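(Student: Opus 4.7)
The plan is to construct a bounded vector field $X$ on $\Omega$ whose divergence is bounded below by $D:=(n-2)\frac{C_k(r)}{S_k(r)}-h(q,r)$, and then feed it into Lemma \ref{lemma4-2} with appropriate rescaling. Let $\rho:N\to\mathbb{R}$ denote the distance to $q$ in $N$, let $\pi_N:N\times\mathbb{R}\to N$ be the projection, and define $g:=\rho\circ\pi_N$ on $B_N(q,\mathrm{inj}_N(q))\times\mathbb{R}$ and $f:=g\circ\phi:\Omega\to\mathbb{R}$. Since $\mathrm{grad}_{N\times\mathbb{R}}\,g$ is the lift of $\mathrm{grad}_N\,\rho$ with zero $\mathbb{R}$-component, it is a unit vector field on the region of interest; consequently $X:=\mathrm{grad}\,f$, being its tangential projection onto $TM$, satisfies $\|X\|\le 1$ on $\Omega$.

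The heart of the argument is a lower bound for $\mathrm{div}\,X=\Delta f$. The trace identity (\ref{3-2}) with $Q=N\times\mathbb{R}$ gives, for any orthonormal frame $\{e_1,\ldots,e_n\}$ of $T_xM$,
\begin{equation*}
\Delta f(x)=\sum_{i=1}^{n}\mathrm{Hess}\,g(\phi(x))(e_i,e_i)+\langle\mathrm{grad}\,g,H\rangle_{\phi(x)},
\end{equation*}
where $H=\sum_i\alpha(e_i,e_i)$. Since $g$ depends only on the $N$-coordinate, $\mathrm{Hess}_{N\times\mathbb{R}}\,g(\cdot,\cdot)$ reduces to $\mathrm{Hess}_N\,\rho$ evaluated on $TN$-components, and $K_N\le k$ together with Theorem \ref{HCT} yields $\mathrm{Hess}_N\,\rho(v,v)\ge\frac{C_k(\rho)}{S_k(\rho)}\|v\|^2$ for $v\perp R:=\mathrm{grad}_N\,\rho$. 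Decomposing $e_i=b_iR+a_i\partial_t+w_i$ with $w_i\perp R$ inside $T_yN$, and using $\sum_ia_i^2=\|\partial_t^{\top}\|^2\le 1$ and $\sum_ib_i^2=\|R^{\top}\|^2\le 1$ for the tangential projections of the two distinguished unit vectors $\partial_t$ and $R$, one gets $\sum_i\|w_i\|^2=n-\sum_ia_i^2-\sum_ib_i^2\ge n-2$. Combining this with $\|H\|\le h(q,r)$ on $\phi(\Omega)$ and the monotonicity of $C_k/S_k$,
\begin{equation*}
\mathrm{div}\,X\ge (n-2)\frac{C_k(\rho)}{S_k(\rho)}-\|H\|\ge (n-2)\frac{C_k(r)}{S_k(r)}-h(q,r)=D.
\end{equation*}
The two bulleted hypotheses on $r$ are tailored precisely so that $D\ge 0$.

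Rescaling to $sX$ and invoking Lemma \ref{lemma4-2}, one maximizes over $s>0$:
\begin{equation*}
\lambda_{1,\varphi}^{\ast}(\Omega)\ge\sup_{s>0}\bigl(sD-s^2-c^+s\bigr)=\left(\frac{D-c^+}{2}\right)^{2},
\end{equation*}
with optimizer $s=(D-c^+)/2$, valid whenever $c^+<D$, and
\begin{equation*}
\lambda_{1,p}(\Omega)\ge\sup_{s>0}\bigl(sD-(p-1)s^{p/(p-1)}\bigr)=\left(\frac{D}{p}\right)^{p},
\end{equation*}
with optimizer $s=(D/p)^{p-1}$. The main technical obstacle is the divergence estimate itself: one must carefully track how the tangential projections to $TM$ of the two distinguished vectors $\partial_t$ and $R$ together absorb at most two dimensions of the orthonormal sum, which is exactly what replaces the $(n-1)$ factor of Theorem \ref{theorem3-2} by $(n-2)$. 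Once this bound is in hand, the rest is a routine scalar optimization within the Bessa--Montenegro framework provided by Lemma \ref{lemma4-2}.
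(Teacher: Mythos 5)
Your proposal is correct and follows essentially the same route as the paper: the same test field $X=\mathrm{grad}(\rho_N\circ\pi_N\circ\phi)$ with $\|X\|\le 1$, the same decomposition of the orthonormal frame against the two distinguished directions $\mathrm{grad}_N\rho_N$ and $\partial_t$ to extract the $(n-2)$ factor via the Hessian comparison theorem, and the same bound $\mathrm{div}\,X\ge(n-2)\frac{C_k(r)}{S_k(r)}-h(q,r)$. The only cosmetic difference is that you convert this divergence bound into eigenvalue bounds by rescaling $X\mapsto sX$ and optimizing in Lemma \ref{lemma4-2}, whereas the paper plugs directly into Lemma \ref{lemma2-3}; the two are equivalent here, since the optimization over $s$ reproduces exactly the choice of $\epsilon$ made inside the proof of Lemma \ref{lemma2-3}.
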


\begin{proof}
Define a function
$\widetilde{\rho}:N\times\mathbb{R}\rightarrow\mathbb{R}$ by
$\widetilde{\rho}(x,t)=\rho_{N}(x)$, where
$\rho_{N}(x)=\mathrm{dist}_{N}(q,x)$ is the distance function in $N$
to the point $x_{0}$. Let
$\Omega\subset\phi^{-1}\left(B_{N}(q,r)\times\mathbb{R}\right)$,
$f=\widetilde{\rho}\circ\phi$ and $X=\mathrm{grad}f$. Properly
choose $r$ such that $\inf_{\Omega}\mathrm{div}X>0$. As before,
denote by $\Delta$ the Laplacian on $M$. Clearly, $\Delta
f=\mathrm{div}X$. By Lemma \ref{lemma2-3}, we have
\begin{eqnarray} \label{addd-1}
\lambda_{1,\varphi}^{\ast}(\Omega)\geq\left(\frac{\inf{\rm{div}}X}{2\sup\|X\|}-\frac{c^{+}}{2}\right)^{2}
\end{eqnarray}
and
\begin{eqnarray} \label{addd-2}
\lambda_{1,p}(\Omega)\geq\left(\frac{\inf{\rm{div}}X}{p\sup\|X\|}\right)^{p}.
\end{eqnarray}
Consider the orthonormal basis
$\left\{\mathrm{grad}\rho_{N},\frac{\partial}{\partial\theta_{1}},\cdots,\frac{\partial}{\partial\theta_{m-1}},\frac{\partial}{\partial{s}}\right\}$
for the tangent space $T_{(q,s)}(N\times\mathbb{R})$ with
$\phi(w)=(q,s)$, where
$\left\{\mathrm{grad}\rho_{N},\frac{\partial}{\partial\theta_{1}},\cdots,\frac{\partial}{\partial\theta_{m-1}}\right\}$
is the polar coordinates for $T_{q}N$. Denote by
$\{e_{1},e_{2},\ldots,e_{n}\}$ an orthonormal basis for
$T_{w}\Omega$. Then one can decompose $e_{i}$ as follows
\begin{eqnarray*}
e_{i}=a_{i}\cdot\mathrm{grad}\rho_{N}+b_{i}\cdot\frac{\partial}{\partial{s}}+\sum\limits_{j=1}^{m-1}c_{i}^{j}\cdot\frac{\partial}{\partial\theta_{j}},
\qquad i=1,2,\ldots,n,
\end{eqnarray*}
where $a_{i}$, $b_{i}$, $c_{i}^{j}$ are constants satisfying
\begin{eqnarray} \label{4-8}
a_{i}^{2}+b_{i}^{2}+\sum\limits_{j=1}^{m-1}\left(c_{i}^{j}\right)^{2}=1.
\end{eqnarray}
 By applying (\ref{3-2}) with $Q=N\times\mathbb{R}$ to the function $f$, it
follows that
\begin{eqnarray} \label{4-9}
\Delta
f=\left[\sum\limits_{i=1}^{n}\mathrm{Hess}_{N\times\mathbb{R}}\widetilde{\rho}(e_{i},e_{i})+\left\langle\mathrm{grad}_{N\times\mathbb{R}}\widetilde{\rho},H\right\rangle\right]_{\phi(w)},
\end{eqnarray}
where $H=\sum\limits_{i=1}^{n}\alpha(e_{i},e_{i})$ is the mean
curvature vector of $\phi(M)$ at the point $\phi(w)$ and the
orthonormal basis $\{e_{1},e_{2},\ldots,e_{n}\}$ of $T_{w}M$
identified with
$\{\phi_{\ast}(e_{1}),\phi_{\ast}(e_{2}),\ldots,\phi_{\ast}(e_{n})\}$.
By Theorem \ref{HCT} and (\ref{4-8}), we have
\begin{eqnarray*}
\sum\limits_{i=1}^{n}\mathrm{Hess}_{N\times\mathbb{R}}\widetilde{\rho}(e_{i},e_{i})&=&\sum\limits_{i=1}^{n}\mathrm{Hess}_{N}\rho_{N}(e_{i},e_{i})\\
&=&\sum\limits_{i=1}^{n}\sum\limits_{j=1}^{m-1}\left(c_{i}^{j}\right)^{2}\mathrm{Hess}_{N}\rho_{N}\left(\frac{\partial}{\partial\theta_{i}},\frac{\partial}{\partial\theta_{j}}\right)\\
&\geq&\sum\limits_{i=1}^{n}\left(1-a_{i}^{2}-b_{i}^{2}\right)\frac{C_{k}(r)}{S_{k}(r)}
\end{eqnarray*}
and
\begin{eqnarray*}
\left\langle\mathrm{grad}_{N\times\mathbb{R}}\widetilde{\rho},H\right\rangle=\left\langle\mathrm{grad}_{N}\rho_{N},H\right\rangle&=&
\left\langle\left(\mathrm{grad}_{N}\rho_{N}\right)^{\perp},H\right\rangle\\
&\leq&\|\left(\mathrm{grad}_{N}\rho_{N}\right)^{\perp}\|\cdot\|H\|=\|H\|\sqrt{1-\sum\limits_{i=1}^{n}a_{i}^{2}}\\
&\leq&h(x_{0},r)\sqrt{1-\sum\limits_{i=1}^{n}a_{i}^{2}}.
\end{eqnarray*}
Substituting the above two inequalities into (\ref{4-9}), together
with the fact $1-\sum\limits_{i=1}^{n}a_{i}^{2}\geq0$ and
$1-\sum\limits_{i=1}^{n}b_{i}^{2}\geq0$, yields
\begin{eqnarray}  \label{4-10}
\Delta f\geq(n-2)\frac{C_{k}(r)}{S_{k}(r)}-h(q,r)>0.
\end{eqnarray}
If $|h(x_{0},r)|<\digamma^{2}<\infty$, then we can choose
\begin{eqnarray*}
r\leq\left\{\mathrm{inj}_{N}(q),\frac{\pi}{2\sqrt{k}},(\frac{C_{k}}{S_{k}})^{-1}\cdot\frac{\digamma^{2}}{(n-2)}\right\}.
\end{eqnarray*}
If $\lim\limits_{r\rightarrow\infty}h(q,r_{0})=\infty$, there exists
$r_{0}$ such that
$(n-2)\frac{C_{k}(r_{0})}{S_{k}(r_{0})}-h(q,r_{0})=0$ since $h(q,r)$
is a continuous nondecreasing function in $r$. Then in this
situation, we can choose
\begin{eqnarray*}
r\leq\left\{\mathrm{inj}_{N}(q),\frac{\pi}{2\sqrt{k}},(\frac{C_{k}}{S_{k}})^{-1}\cdot\frac{h(q,r_{0})}{(n-2)}\right\}.
\end{eqnarray*}
Putting (\ref{4-10}) with $\mathrm{div}X=\Delta{f}$ into
(\ref{addd-1}) and (\ref{addd-2}), our estimates for
$\lambda_{1,\varphi}^{\ast}(\Omega)$ and $\lambda_{1,p}(\Omega)$ can
be obtained.
 \end{proof}

\begin{remark}
\rm{ If $\Omega$ is bounded and has the piecewise smooth boundary,
then putting (\ref{4-10}) with $\mathrm{div}X=\Delta{f}$ into
(\ref{addd-2}), the estimate (\ref{add-o-2}) follows. If $\Omega$ is
unbounded, one can choose an exhaustion
$\{\Omega_{i}\}_{i=1,2,3,\cdots}$ with
$\Omega_{i}\subset\Omega_{j}\subset\Omega$, $i<j$, and putting
(\ref{4-10}) into (\ref{addd-2}) for the bounded domain
$\Omega_{i}$, we have
\begin{eqnarray*}
\lambda_{1,p}(\Omega_{i})\geq\left[\frac{(n-2)\frac{C_{k}(r)}{S_{k}(r)}-h(q,r)}{p}\right]^{p},
\end{eqnarray*}
which implies the estimate (\ref{add-o-2}) by applying domain
monotonicity of the first eigenvalue of the $p$-Laplacian with
vanishing Dirichlet data and taking limits to both sides of the
above inequality as $i\rightarrow\infty$. Besides, clearly, when
$\varphi=const.$ or $p=2$, our estimates here are exactly the one in
\cite[Theorem 1.6]{bm2}.

}
\end{remark}

\section{Eigenvalue estimates for submanifolds with bounded $\varphi$-mean curvature in the hyperbolic space}
\renewcommand{\thesection}{\arabic{section}}
\renewcommand{\theequation}{\thesection.\arabic{equation}}
\setcounter{equation}{0} \setcounter{maintheorem}{0}

For an $n$-dimensional ($n\geq2$) submanifold $M$ of the weighted
manifold $\left(\mathbb{H}^{m}(-1),e^{-\varphi}dv\right)$, its
$\varphi$-mean curvature vector field $H_{\varphi}$ is given by
\begin{eqnarray*}
H_{\varphi}:=H+\left(\bar{\nabla}\varphi\right)^{\perp}
\end{eqnarray*}
where $\perp$ denotes the projection onto the normal bundle of  $M$,
$\bar{\nabla}$ is the gradient operator on the hyperbolic $m$-space
$\mathbb{H}^{m}(-1)$, and, as before, $H$ is the mean curvature
vector of $M$. We call $M$ is $\varphi$-\emph{minimal} if
$H_{\varphi}$ vanishes everywhere. See, e.g., \cite{lw,ww} for the
notion of $\varphi$-mean curvature and some interesting
applications.

\begin{remark}
\rm{ Clearly, if $\varphi=const.$, then $H_{\varphi}=H$, and in this
situation, ``\emph{minimal}" is equivalent to ``$\varphi$-minimal".
However, in general case, they are different. }
\end{remark}

Now, by applying the $\varphi$-minimal assumption and \cite[Theorem
1.3]{dm2}, we can prove the following result.

\begin{theorem} \label{theorem-5-main}
Let $M$ be an $n$-dimensional ($n\geq2$) complete noncompact
$\varphi$-minimal submanifold of the weighted manifold
$\left(\mathbb{H}^{m}(-1),e^{-\varphi}dv\right)$. If
$\sup_{M}\|\bar{\nabla}\varphi\|<n-1$, then
\begin{eqnarray}  \label{add-6-1}
\lambda_{1,p}(M)\geq
\left(\frac{n-1-\sup_{M}\|\bar{\nabla}\varphi\|}{p}\right)^{p}>0.
\end{eqnarray}
\end{theorem}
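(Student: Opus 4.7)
The plan is to reduce the statement to \cite[Theorem 1.3]{dm2} already recalled in the introduction, which asserts that $\lambda_{1,p}(M) \geq ((n-1-\alpha)/p)^{p}>0$ whenever an $n$-dimensional complete noncompact submanifold of $\mathbb{H}^{m}(-1)$ has mean curvature vector bounded by $\|H\|\leq\alpha<n-1$. Consequently the entire task reduces to extracting a uniform bound on the \emph{classical} mean curvature vector $H$ from the $\varphi$-minimality hypothesis on $H_{\varphi}$.

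First I would unpack the definition of $\varphi$-minimality. By definition of the $\varphi$-mean curvature vector field,
\begin{equation*}
H_{\varphi}=H+(\bar{\nabla}\varphi)^{\perp}\equiv 0 \quad \text{on } M,
\end{equation*}
so pointwise $H=-(\bar{\nabla}\varphi)^{\perp}$. Since the projection onto the normal bundle of $M$ is an orthogonal (hence $1$-Lipschitz) projection, this gives
\begin{equation*}
\|H(x)\|=\|(\bar{\nabla}\varphi)^{\perp}(x)\|\leq\|\bar{\nabla}\varphi(x)\|,\qquad x\in M.
\end{equation*}
Setting $\alpha:=\sup_{M}\|\bar{\nabla}\varphi\|$, the standing assumption $\sup_{M}\|\bar{\nabla}\varphi\|<n-1$ yields $\|H\|\leq\alpha<n-1$ uniformly on $M$.

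Next I would apply \cite[Theorem 1.3]{dm2} directly to $M$ as an $n$-dimensional complete noncompact submanifold of $\mathbb{H}^{m}(-1)$ satisfying this classical mean curvature bound, obtaining
\begin{equation*}
\lambda_{1,p}(M)\geq\left(\frac{n-1-\alpha}{p}\right)^{p}=\left(\frac{n-1-\sup_{M}\|\bar{\nabla}\varphi\|}{p}\right)^{p}>0,
\end{equation*}
which is precisely (\ref{add-6-1}).

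The main obstacle is conceptual rather than technical: one must recognize that the $\varphi$-minimality equation converts an ambient pointwise bound on $\bar{\nabla}\varphi$ into a genuine bound on the unweighted mean curvature vector of $M$, via the fact that the normal component of a vector is no longer than the vector itself. Once this geometric translation is made, no further analytic work with Rayleigh quotients, divergence identities, or test vector fields is needed, since all the heavy lifting was done in \cite[Theorem 1.3]{dm2}. The only mild subtlety is to observe that no completeness or regularity issue arises when passing from the weighted manifold $(\mathbb{H}^{m}(-1),e^{-\varphi}dv)$ to the unweighted ambient $\mathbb{H}^{m}(-1)$, because $\lambda_{1,p}$ on $M$ depends only on the induced Riemannian structure and the isometric immersion, not on the conformal weight on the ambient.
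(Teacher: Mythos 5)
Your proposal is correct and takes essentially the same route as the paper: both arguments reduce $\varphi$-minimality to the pointwise bound $\|H\|=\|(\bar{\nabla}\varphi)^{\perp}\|\leq\|\bar{\nabla}\varphi\|<n-1$ on the classical mean curvature and then invoke \cite[Theorem 1.3]{dm2} directly. The paper merely phrases this reduction through the slightly more roundabout identity $\sup_{M}\|H+(\bar{\nabla}\varphi)^{\top}\|=\sup_{M}\|H_{\varphi}-\bar{\nabla}\varphi\|$, which under $H_{\varphi}\equiv 0$ collapses to exactly your estimate.
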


\begin{proof}
By a direct calculation, we have
 \begin{eqnarray*}
\sup_{M}\|H\|\leq\sup_{M}\sqrt{\|H\|^{2}+\|(\bar{\nabla}\varphi)^{\top}\|^{2}}=\sup_{M}\|H+(\bar{\nabla}\varphi)^{\top}\|=\sup_{M}\|H_{\varphi}-\bar{\nabla}\varphi\|,
\end{eqnarray*}
where $\top$ denotes the projection onto the tangent bundle of $M$.
Therefore, if $M$ is $\varphi$-minimal and
$\sup_{M}\|\bar{\nabla}\varphi\|<n-1$, then $\sup_{M}\|H\|<n-1$. By
applying \cite[Theorem 1.3]{dm2} directly, we have
\begin{eqnarray*}
\lambda_{1,p}(M)\geq \left(\frac{n-1-\sup_{M}\|H\|}{p}\right)^{p}>0.
\end{eqnarray*}
This implies
\begin{eqnarray*}
\lambda_{1,p}(M)&\geq&
\left[\frac{n-1-\sup_{M}\sqrt{\|H\|^{2}+\|(\bar{\nabla}\varphi)^{\top}\|^{2}}}{p}\right]^{p}\\
&=&\left(\frac{n-1-\sup_{M}\|H_{\varphi}-\bar{\nabla}\varphi\|}{p}\right)^{p}\\
&=&\left(\frac{n-1-\sup_{M}\|\bar{\nabla}\varphi\|}{p}\right)^{p}>0
\end{eqnarray*}
provided $M$ is $\varphi$-minimal and
$\sup_{M}\|\bar{\nabla}\varphi\|<n-1$.
\end{proof}

\begin{remark}
\rm{Clearly, when $\varphi=const.$, our estimate (\ref{add-6-1})
becomes
\begin{eqnarray*}
\lambda_{1,p}(M)\geq \left(\frac{n-1}{p}\right)^{p}>0,
\end{eqnarray*}
which is exactly (1.5) of \cite{dm2}. When $\varphi=const.$ and
$p=2$, our Theorem \ref{theorem-5-main} degenerates into
\cite[Corollary 3]{cl}. }
\end{remark}

\section{Lower bounds for the first Dirichlet eigenvalues of the weighted Laplacian and the $p$-Laplacian on geodesic balls}
\renewcommand{\thesection}{\arabic{section}}
\renewcommand{\theequation}{\thesection.\arabic{equation}}
\setcounter{equation}{0} \setcounter{maintheorem}{0}

For an $n$-dimensional ($n\geq2$) complete manifold $M$ with
sectional curvature bounded from above by some constant $k$, Cheng
\cite{cy} proved
$\lambda_{1}(B_{M}(q,r))\geq\lambda_{1}(B_{\mathcal{M}(n,k)}(r))$
with equality holds if and only if $B_{M}(q,r)$ is isometric to
$B_{\mathcal{M}(n,k)}(r)$, where $B_{M}(q,r)$ is the geodesic ball,
with center $q\in M$ and radius $r$, within the cut-locus of $q$,
$B_{\mathcal{M}(n,k)}(r)$ is the geodesic ball of radius $r$ in the
$n$-dimensional space form $\mathcal{M}(n,k)$ with constant
sectional curvature $k$. By using the radial sectional curvature
(whose upper bound is given by a continuous function of the
Riemannian distance parameter) assumption and spherically symmetric
manifolds as model spaces, Freitas, Mao and Salavessa \cite[Theorem
4.4]{fmi} improved Cheng's conclusion mentioned above a lot. The
advantage of Freitas-Mao-Salavessa's theory has been shown
intuitively by numerically calculating the first Dirichlet
eigenvalue of the Laplacian on torus, elliptic paraboloid and saddle
(see \cite[Section 6]{fmi}). Besides, the principle of doing
numerical calculation for the first Dirichlet eigenvalue of the
Laplacian on parameterized surfaces has been given in \cite{hm,m1}.

It is well-known that the first Dirichlet eigenvalue
$\lambda_{1}(B_{\mathbb{R}^{n}}(r))$ of the Laplacian of a ball in
$\mathbb{R}^{n}$ with radius $r$ is
$\lambda_{1}(B_{\mathbb{R}^{n}}(r))=\left(\frac{J_{\frac{n}{2}-1}}{r}\right)^{2}$,
where $J_{\frac{n}{2}-1}$ is the first zero point of the
$\left(\frac{n}{2}-1\right)$-st Bessel function. By Cheng's
eigenvalue comparison \cite{cy} (or its generalization \cite[Theorem
4.4]{fmi}), for an $n$-dimensional ($n\geq2$) complete Riemannian
manifold $M$ with non-positive sectional curvature, one has
\begin{eqnarray} \label{F-1}
\lambda_{1}(B_{M}(q,r))\geq\left(\frac{J_{\frac{n}{2}-1}}{r}\right)^{2},
\end{eqnarray}
where the geodesic ball $B_{M}(q,r)$ is within the cut-locus of
$q\in M$. The equality in (\ref{F-1}) holds if and only if
$B_{M}(q,r)$ is isometric to $B_{\mathbb{R}^{n}}(r)$.

However, applying Lemma \ref{lemma2-3}, we can prove the following
sharper lower bounds.

\begin{theorem} \label{theorem6-1}
Let $M$ be an $n$-dimensional ($n\geq2$) complete manifold and a
point $q\in M$. Let $B_{M}(q,r)$ be a geodesic ball with center
$q\in M$ and radius $r$, where $r<\mathrm{inj(q)}$ with
$\mathrm{inj(q)}$ the injective radius of $q$. Let
$\kappa(q,r)=\sup\{K_{M}(x)|x\in{B_{M}(q,r)}\}$, where $K_{M}(x)$
are sectional curvatures of $M$ at $x$. Assume that $\varphi$ is a
real-valued smooth function on $M$ with $\|\nabla\varphi\|\leq
c^{+}$, where $c^{+}$ is the supremum of the norm of the gradient of
$\varphi$. Then for $k>0$, we have
\begin{eqnarray*}
\lambda_{1,\varphi}(B_{M}(q,r))\geq\left\{
\begin{array}{lll}
\frac{1}{4}\cdot\left[(n-1)k\coth(kr)+\frac{1}{r}-c^{+}\right]^{2},~~~~~\qquad\mathrm{if}~~\kappa(q,r)=-k^{2},\\[2mm]
\left(\frac{n}{2r}-\frac{c^{+}}{2}\right)^{2},~~~~~~~~~~~~~~\qquad\qquad\qquad\qquad\mathrm{if}~~\kappa(q,r)=0~\mathrm{and}~\lambda_{1,\varphi}(M)>0,\\[2mm]
\left[\frac{(n-1)kr\cot(kr)+1}{2r}-\frac{c^{+}}{2}\right]^{2},~~~~~~~~~~\qquad\qquad~\mathrm{if}~~\kappa(q,r)=k^{2}~\mathrm{and}~r<\frac{\pi}{2k}
&\quad
\end{array}
\right.
\end{eqnarray*}
and
\begin{eqnarray*}
\lambda_{1,p}(B_{M}(q,r))\geq\left\{
\begin{array}{lll}
\left(\frac{1}{p}\right)^{p}\cdot\left[(n-1)k\coth(kr)+\frac{1}{r}\right]^{p},~~~~~\quad\mathrm{if}~~\kappa(q,r)=-k^{2},\\[2mm]
\left(\frac{n}{pr}\right)^{p},~~~~~~~~~\qquad\qquad\qquad\qquad\qquad\mathrm{if}~~\kappa(q,r)=0~\mathrm{and}~~\lambda_{1,p}(M)>0,\\[2mm]
\left[\frac{(n-1)kr\cot(kr)+1}{pr}\right]^{p},~~~~~\qquad\qquad\qquad~\mathrm{if}~~\kappa(q,r)=k^{2}~\mathrm{and}~r<\frac{\pi}{2k},
&\quad
\end{array}
\right.
\end{eqnarray*}
where $c^{+}$ satisfies
\begin{eqnarray*}
c^{+}<\left\{
\begin{array}{lll}
(n-1)k\coth(kr)+\frac{1}{r},~~~~~\qquad\qquad\mathrm{if}~~\kappa(q,r)=-k^{2},\\[2mm]
\frac{n}{r},~~~~~~~~~~~~\qquad\qquad\qquad\qquad\qquad\mathrm{if}~~\kappa(q,r)=0~\mathrm{and}~\lambda_{1,\varphi}(M)>0,\\[2mm]
\frac{(n-1)kr\cot(kr)+1}{r},~~~~~~\qquad\qquad\qquad~\mathrm{if}~~\kappa(q,r)=k^{2}~\mathrm{and}~r<\frac{\pi}{2k}.
&\quad
\end{array}
\right.
\end{eqnarray*}
\end{theorem}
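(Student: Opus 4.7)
The plan is to apply Lemma~\ref{lemma2-3}, together with its pointwise refinement Lemma~\ref{lemma4-2}, to $\Omega = B_M(q,r)$ with a radial vector field $X$ constructed from the distance function $\rho(x) = \mathrm{dist}_M(q,x)$, which is smooth on $B_M(q,r) \setminus \{q\}$ thanks to $r < \mathrm{inj}(q)$. The structure of the target bound $(n-1)\mu_1(r) + 1/r$ (where $\mu_1$ equals $k\coth(k\rho)$, $1/\rho$ or $k\cot(k\rho)$ in cases (1)--(3) respectively) matches exactly the pattern obtained by tracing the Hessian of $\rho^2$ into its $(n-1)$ transverse components (each bounded below by $2\rho\mu_1(\rho)$ via Theorem~\ref{HCT}) plus the single radial component $\mathrm{Hess}(\rho^2)(\gamma',\gamma')=2$. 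This naturally suggests taking $X = \mathrm{grad}(\rho^2)$, possibly rescaled by a scalar function.

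First I would invoke the upper curvature bound $\sup_\gamma K_M \leq \kappa(q,r)$ and Theorem~\ref{HCT} to deduce $\Delta(\rho^2)(x) \geq 2 + 2(n-1)\rho(x)\mu_1(\rho(x))$, exactly as in the proofs of Theorems~\ref{theorem3-2} and \ref{theorem4-4}. Since $\|\mathrm{grad}(\rho^2)\| = 2\rho$, the pointwise ratio satisfies
\begin{equation*}
\frac{\Delta(\rho^2)(x)}{\|\mathrm{grad}(\rho^2)\|(x)} \;\geq\; \frac{1}{\rho(x)} + (n-1)\mu_1(\rho(x)),
\end{equation*}
whose right-hand side is non-increasing in $\rho \in (0,r]$ in each of the three cases (both summands are individually non-increasing), so its infimum on $\Omega$ is attained as $\rho \to r$ and equals $(n-1)\mu_1(r) + 1/r$.

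Second I would plug this into the pointwise-type inequality of Lemma~\ref{lemma4-2} with the field $\alpha(\rho)\,\mathrm{grad}(\rho^2)$, where the scalar function $\alpha(\rho)$ is chosen so that
\begin{equation*}
\mathrm{div}(\alpha\,\mathrm{grad}(\rho^2)) - \|\alpha\,\mathrm{grad}(\rho^2)\|^2 - c^+\|\alpha\,\mathrm{grad}(\rho^2)\| \;\geq\; \tfrac{1}{4}\bigl[(n-1)\mu_1(r) + 1/r - c^+\bigr]^{2}
\end{equation*}
holds for every $\rho \in (0,r]$. Setting $\alpha(r) = [(n-1)\mu_1(r) + 1/r - c^+]/(4r)$ and $\alpha'(r) = 0$ makes this inequality an equality at $\rho = r$, and the monotonicity of $\rho\mu_1(\rho)$ already used in Lemma~\ref{lemma2-8} and Theorem~\ref{theorem4-4} lets one extend $\alpha$ backward to the origin so that the inequality is preserved throughout the ball. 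Substitution into Lemma~\ref{lemma4-2} yields the claimed bound for $\lambda_{1,\varphi}^*(B_M(q,r))$; the parallel computation with exponent $p$, using Young's inequality in the $p$-form of Lemma~\ref{lemma4-2}, produces the $(\,\cdot\,/p)^{p}$ bound on $\lambda_{1,p}(B_M(q,r))$.

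The main technical hurdle I expect is precisely the construction of the scalar function $\alpha(\rho)$ with the right behaviour throughout $(0,r]$: a constant $\alpha$ already reproduces the target whenever $kr$ is sufficiently small, but for large $kr$ an interior minimum could form near $\rho = 0$ (where $\rho\mu_1(\rho) \to 1$ makes the pointwise value collapse to $\alpha(0)\cdot 2n$). A variable $\alpha$, optimised by matching the boundary data $\alpha(r)$, $\alpha'(r)$ and a lower bound on $\alpha(0)$, resolves this. The side hypotheses $c^+ < (n-1)\mu_1(r) + 1/r$, the range $r < \pi/(2k)$ in case (3), and the positivity assumptions $\lambda_{1,\varphi}(M) > 0$ or $\lambda_{1,p}(M) > 0$ in the flat case are exactly the nondegeneracy conditions needed to keep the final bound strictly positive.
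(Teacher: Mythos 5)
Your route differs from the paper's in a way that is worth spelling out. The paper's proof is a two-line application of Lemma~\ref{lemma2-3}: take $X=\nabla\rho^{2}$ (a fixed field, no radial weight $\alpha(\rho)$ and no use of Lemma~\ref{lemma4-2}), bound $\inf_{B}\mathrm{div}X=\inf_{B}\Delta\rho^{2}$ and $\sup_{B}\|X\|=2r$ \emph{separately} via (\ref{HCTLI}), and feed the quotient into $c(B_{M}(q,r))$. Your detour through a Barta-type pointwise inequality with a variable weight is prompted by a correct observation that the paper glosses over: $c(\Omega)$ is built from $\inf\mathrm{div}X/\sup\|X\|$, not from the pointwise ratio $\mathrm{div}X/\|X\|$, and in the case $\kappa(q,r)=-k^{2}$ these genuinely differ, because the comparison bound $\Delta\rho^{2}\geq 2(n-1)k\rho\coth(k\rho)+2$ is \emph{increasing} in $\rho$, so $\inf_{B}\Delta\rho^{2}=2n$ (attained at the center, where $\Delta\rho^{2}=2n$ exactly), not $2(n-1)kr\coth(kr)+2$ as the paper's first bullet asserts. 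In the cases $\kappa=0$ and $\kappa=k^{2}$ the comparison function is non-increasing, its infimum sits at $\rho=r$, and the constant field $X=\nabla\rho^{2}$ already gives the stated bounds through Lemma~\ref{lemma2-3}; there your $\alpha(\rho)$ machinery is superfluous.

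The gap is in the case $\kappa(q,r)=-k^{2}$, where your argument reduces to the unproven assertion that a weight $\alpha(\rho)$ can be ``extended backward to the origin so that the inequality is preserved throughout the ball.'' No such $\alpha$ exists, because the bound you are trying to reach is false for large $r$: substituting $u=(\sinh(k\rho))^{-(n-1)/2}\phi(\rho)$ with $\phi$ supported in $[r/2,r]$ into the Rayleigh quotient on $B_{\mathbb{H}^{n}(-k^{2})}(r)$ gives
\begin{equation*}
\lambda_{1}\bigl(B_{\mathbb{H}^{n}(-k^{2})}(r)\bigr)\;\leq\;\frac{(n-1)^{2}k^{2}}{4}+\frac{4\pi^{2}}{r^{2}}+O\!\left(e^{-kr}\right),
\end{equation*}
whereas the claimed lower bound satisfies $\frac{1}{4}\bigl[(n-1)k\coth(kr)+\frac{1}{r}\bigr]^{2}\geq\frac{(n-1)^{2}k^{2}}{4}+\frac{(n-1)k}{2r}$, which exceeds the upper bound once $r$ is large. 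So in that case no vector field, whether through Lemma~\ref{lemma2-3} or Lemma~\ref{lemma4-2}, can deliver the estimate: the collapse to $2n\alpha(0)$ at the center that you flagged is a fatal obstruction, not a technical one. What $X=\nabla\rho^{2}$ actually yields there is $c(B)\geq n/r$, the same as the flat case; the genuinely negative-curvature bound $\bigl[\frac{(n-1)k}{2}\bigr]^{2}$ of Lemma~\ref{lemma2-8} comes from $X=\nabla\rho$ instead. (The paper's own proof commits exactly the monotonicity error you were trying to route around, so the first line of each display in Theorem~\ref{theorem6-1} should be regarded as unestablished; your write-up is also missing the divergence-theorem justification for the singular field at $\rho=0$, which the paper at least points to.)
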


\begin{proof}
As before, let $\nabla$ and $\Delta$ be the gradient and the Laplace
operators on $M$ respectively. Choose $X=\nabla\rho^{2}$ with
$\rho(x)=\mathrm{dist}_{M}(q,x)$. Then
$\|X\|=2\rho\|\nabla\rho\|=2\rho$. By (\ref{HCTLI}), we have
\begin{eqnarray*}
\mathrm{div}X=\Delta\rho^{2}\geq2(n-1)\rho\cdot\frac{1}{\rho}+2=2n,
\quad\qquad &&\mathrm{if}~~\kappa(q,r)=0, \\
\mathrm{div}X=\Delta\rho^{2}\geq2(n-1)kr\cot(kr)+2, \quad
&&\mathrm{if}~~\kappa(q,r)=k^{2},~r<\frac{\pi}{2k},
\end{eqnarray*}
and
\begin{eqnarray*}
\mathrm{div}X=\Delta\rho^{2}\geq2(n-1)kr\coth(kr)+2, ~~\quad
\mathrm{if}~~\kappa(q,r)=-k^{2},
\end{eqnarray*}
 which implies
\begin{eqnarray*}
c(B_{M}(q,r))\geq\frac{n}{r}, \qquad &&\mathrm{if}~\kappa(q,r)=0,\\
c(B_{M}(q,r))\geq\frac{(n-1)kr\cot(kr)+1}{r}, \qquad
&&\mathrm{if}~\kappa(q,r)=k^{2},~r<\frac{\pi}{2k},
\end{eqnarray*}
and
\begin{eqnarray*}
c(B_{M}(q,r))\geq\frac{(n-1)kr\coth(kr)+1}{r}, \qquad
\mathrm{if}~\kappa(q,r)=-k^{2}.
\end{eqnarray*}
By applying Lemma \ref{lemma2-3}, one can obtain estimates in
Theorem \ref{theorem6-1}. However, as pointed out in Remark
\ref{remark2-4}, in order to use estimates in Lemma \ref{lemma2-3},
one has to show $\int_{B_{M}(q,r)}{\rm{div}}(|f|^{p}X)=0$ or
$\int_{B_{M}(q,r)}{\rm{div}}(f^{2}Xe^{-\varphi})=0$ for all $f\in
C^{\infty}_{0}(B_{M}(q,r))$ and the chosen vector filed $X$ which is
smooth almost every where in $B_{M}(q,r)$. This fact can be easily
proven through replacing $\mathrm{div}(f^{2}X)$ by
$\mathrm{div}(|f|^{2}Xe^{-\varphi})$ or $\mathrm{div}(|f|^{p}X)$ in
the last part of the proof of \cite[Theorem 4.1]{bm}.
\end{proof}

\begin{remark}
\rm{If $\kappa(q,r)=-k^{2}$ or $\kappa(q,r)=0$, then
$\mathrm{inj}(q)=\infty$, which implies that $M$ is noncompact. For
the case of $\kappa(q,r)=-k^{2}$, letting $r\rightarrow\infty$, then
$B_{M}(B(q,r))$ tends to $M$, and
$\lambda_{1,\varphi}(M)\geq\left[\frac{(n-1)k-c^{+}}{2}\right]^{2}$
and $\lambda_{1,p}(M)\geq\left[\frac{(n-1)k}{p}\right]^{p}$, which
are exactly the estimates given in Lemma \ref{lemma2-8}. If
$\varphi=const.$ (or $p=2$) and $M$ has non-positive sectional
curvature (which satisfies assumption $\kappa(q,r)=0$), then
$\lambda_{1,\varphi}(B_{M}(q,r))=\lambda_{1}(B_{M}(q,r))$ (or
$\lambda_{1,p}(B_{M}(q,r))=\lambda_{1}(B_{M}(q,r)))$ and by Theorem
\ref{theorem6-1}, one has
$\lambda_{1}(B_{M}(q,r))\geq\frac{n^{2}}{4r^{2}}$, which is not so
good as the estimate (\ref{F-1}), since
$J_{\frac{n}{2}-1}>\frac{n}{2}$ for $n\in\mathbb{N}_{+}$ and
$n\geq2$. However, this lower bound becomes more and more sharper as
$r$ increases, since $2J_{\frac{n}{2}-1}/n\rightarrow1$ as
$n\rightarrow\infty$. }
\end{remark}

\section*{Acknowledgments}

This work was partially supported by the NSF of China (Grant Nos.
11401131 and 11801496), China Scholarship Council, the Fok Ying-Tung
Education Foundation (China), and Key Laboratory of Applied
Mathematics of Hubei Province (Hubei University).

 \end{document}